\newtheorem{lemma}{Lemma}[section]
\newtheorem{theorem}{Theorem}[section]
\newtheorem{remark}{Remark}[section]
\newtheorem{definition}{Definition}[section]
\newtheorem{proposition}{Proposition}[section]
\newtheorem{assumption}{Assumption}[section]
\begin{document}
%
\title{ Distributed stochastic proximal algorithm with random reshuffling for non-smooth finite-sum optimization}
%
%
%

\author{Xia Jiang,
        Xianlin~Zeng,~\IEEEmembership{Member,~IEEE,}
        Jian~Sun,~\IEEEmembership{Senior Member,~IEEE,}
        Jie~Chen,~\IEEEmembership{Fellow,~IEEE,}
        and~Lihua~Xie,~\IEEEmembership{Fellow,~IEEE} 
\thanks{This work was supported in part by the National Key R$\&$D Program of China under Grant 2021YFB1714800, the National Natural Science Foundation of China under Grants 61925303, 62088101, 62073035, 62173034, the Natural Science Foundation of Chongqing under Grant 2021ZX4100027 and China Scholarship Council 202006030072. \emph{(Corresponding author: Jian Sun.)}}
\thanks{X. Jiang (jiang-xia@bit.edu.cn) and J. Sun (sunjian@bit.edu.cn) are with Key Laboratory of Intelligent Control and Decision of Complex Systems, School of Automation, Beijing Institute of Technology, Beijing, 100081, China, and also with the Beijing Institute of Technology Chongqing Innovation Center, Chongqing  401120, China}
\thanks{X. Zeng (xianlin.zeng@bit.edu.cn) is with Key Laboratory of Intelligent Control and Decision of Complex Systems, School of Automation, Beijing Institute of Technology, Beijing, 100081, China}
\thanks{J. Chen (chenjie@bit.edu.cn) is with the School of Electronic and Information Engineering, Tongji University, Shanghai, 200082, China, and also with Beijing Advanced Innovation Center for Intelligent Robots and Systems (Beijing Institute of Technology), Key Laboratory of Biomimetic Robots and Systems (Beijing Institute of Technology), Ministry of Education, Beijing, 100081, China.}
\thanks{L. Xie (elhxie@ntu.edu.sg) is with the School of Electrical and Electronic Engineering, Nanyang Technological University, Singapore 639798.}}

\maketitle

{
\begin{abstract}
 The non-smooth finite-sum minimization is a fundamental problem in machine learning. This paper develops a distributed stochastic proximal-gradient algorithm with random reshuffling to solve the finite-sum minimization over time-varying multi-agent networks. The objective function is a sum of differentiable convex functions and non-smooth regularization. Each agent in the network updates local variables by local information exchange and cooperates to seek an optimal solution. We prove that local variable estimates generated by the proposed algorithm achieve consensus and are attracted to a neighborhood of the optimal solution with an
  $\mathcal{O}(\frac{1}{T}+\frac{1}{\sqrt{T}})$ convergence rate, where $T$ is the total number of iterations.  Finally, some comparative simulations are provided to verify the convergence performance of the proposed algorithm. 
\end{abstract}
}
\begin{IEEEkeywords}
 distributed optimization, proximal operator, random reshuffling, stochastic algorithm, time-varying graphs
\end{IEEEkeywords}

%
\IEEEpeerreviewmaketitle

\section{Introduction}
%
%
%
%
\IEEEPARstart{D}{istributed} non-smooth finite-sum minimization is a basic problem of the popular supervised machine learning \cite{no_smooth_view,non-smooth_nnls,svm_nonsmooth}. In machine learning communities, optimizing a training model with merely the average loss over a finite data set usually leads to over-fitting or poor generalization. Some non-smooth regularizations are often included in the cost function to encode prior knowledge, which also introduces the challenge of non-smoothness. What's more, in many practical network systems \cite{distri_tra_coord,dis_ecodis_powerflow}, training data are naturally stored at different physical nodes and it is expensive to collect data and train the model in one centralized node. Compared to the centralized setting, the distributed setting makes use of multi computational sources to train the learning model in parallel, leading to potential speedup. However, since the data are distributed and the communication is limited, more involved approaches are needed to solve the minimization problem. Therefore, distributed finite-sum minimization has attracted much attention in machine learning, especially for large-scale applications with big data. 
\par For large-scale non-smooth finite-sum optimization problems, there have been many efficient centralized and distributed first-order algorithms, including subgradient-based methods \cite{sto_subgra_ctt,DGM-nedic,pmlr-v33-zhong14} and proximal gradient algorithms \cite{Schmidt2012,bertsekas2017incremental,dis_conti_prox,dis_prox_constraint,dec_pro_linear,ADMM_pro_dec}. Subgradient-based method is very generic in optimization with the expense of slow convergence. To be specific, subgradient-based algorithms may increase the objective function of the optimization problem for small step-sizes \cite{bertsekas2017incremental}. Proximal gradient algorithms are considered more stable than subgradient-based methods and often own better numerical performance than subgradient-based methods for convex optimization \cite{lec}. Hence, proximal gradient algorithms have attracted great interest for large-scale non-smooth optimization problems. Particularly,  training data are allocated to different computing nodes and the non-smooth finite-sum optimization needs to be handled efficiently in a distributed manner. Many distributed deterministic proximal gradient algorithms have been proposed with guaranteed convergence for non-smooth optimization over time-invariant and time-varying graphs \cite{dis_prox_constraint,dec_pro_linear,ADMM_pro_dec}. These distributed works are developed under the primal-dual framework and are applicable for the constrained non-smooth optimization. However, these distributed algorithms focus on the deterministic non-smooth optimization and require computing the full gradients of all local functions at each iteration. The per iteration computational cost  of a deterministic algorithm is much higher than that of a stochastic gradient method, which hinders the application of deterministic algorithms for non-smooth optimization with large-scale data.  
\par In large-scale machine learning settings, various first-order stochastic methods are leading algorithms due to their scalability and low computational requirements.  Decentralized stochastic gradient descent algorithms have gained a lot of attention recently, especially for the traditional federated learning setting with a star-shaped network topology  \cite{clus_fl_nnls,gradient_des_if,cc_dis_sto}. All of these methods are not fully distributed in a sense that they require a central parameter server. The potential bottleneck of the star-shaped network is possible communication traffic jam on the central sever and the performance will be significantly degraded when the network bandwidth is low. To consider a more general distributed network topology without a central server, many distributed stochastic gradient algorithms have been studied \cite{sgd_nonsmooth_opti,per_dis_sto,dsg_communi,DPSG,Cons_based_dis} for convex finite-sum optimization. 
To avoid the degenerated performance of algorithms with diminishing steps-sizes, some recent works \cite{DPSG} and \cite{Cons_based_dis} have proposed consensus-based distributed stochastic gradient descent methods with non-diminishing (constant) step-sizes for smooth convex optimization over time-invariant networks. With the variance reduction technique to handle the variance of the local stochastic gradients at each node, some distributed efficient stochastic algorithms with constant step-sizes have been studied for strongly convex smooth optimization over time-invariant undirected networks \cite{2020TSP} and time-invariant directed networks \cite{push_saga}.  However, most of the existing distributed stochastic algorithms are designed for smooth optimization and are only applicable over time-invariant networks. 
\par In practice, a communication network may be time-varying, since network attacks may cause failures of communications and connectivity of a network may change, especially for mobile networks \cite{dist_attack,distri_tra_coord}.  Therefore, it is necessary to study distributed algorithms over time-varying networks. 
Considering the unreliable network communication, some works have studied distributed asynchronous stochastic algorithms with diminishing step-sizes for smooth convex optimization over multi-agent networks \cite{per_dis_sto,dsg_communi}. To avoid the degenerated performance of algorithms with diminishing step-sizes, \cite{DGM-nedic} developed a distributed gradient algorithm with a multi-step consensus mapping for multi-agent optimization. This multi-step consensus mapping has also been widely applied in the follow-up works \cite{distri-nedich,proximal-convex} for multi-agent consensus and non-smooth optimization over time-varying networks. Differently from these deterministic works, this paper studies a distributed stochastic gradient algorithm with sample-without-replacement heuristic for non-smooth finite-sum optimization over time-varying multi-agent networks. 
\par Random reshuffling (RR) is a simple, popular but elusive stochastic method for finite-sum minimization in machine learning. Contrasted with stochastic gradient descent (SGD), where the training data are sampled uniformly with replacement, the sampling-without-replacement RR method  learns from each data point in each epoch and often own better performance in many practical problems\cite{SGD_opti_rate,why_rand_good,psg}. The convergence properties of SGD are well-understood with tight lower and upper bounds in many settings \cite{gd_2012,pmlr-v119-drori20a}, however, the theoretical analysis of RR had not been studied until recent years. The sampling-without-replacement in RR introduces a significant complication that the gradients are now biased, which implies that a single iteration does not approximate a full gradient descent step.  The incremental gradient algorithm (IG) is one special case of the shuffling algorithm. The IG generates a deterministic permutation before the start of epochs and reuses the permutation in all subsequent epochs. In contrast, in the RR, a new permutation is generated at the beginning of each epoch. The implementation of IG has a challenge of choosing a suitable permutation for cycling through the iterations \cite{Nedic_2001_siam}, and the IG is susceptible to bad orderings compared to RR \cite{Bertsekas_incre_book}. Therefore, it is necessary to study a random reshuffling algorithm for convex non-smooth optimization. For strongly convex optimization, the seminal work \cite{why_rand_good} theoretically characterized various convergence rates for the random reshuffling method.
One inspiring recent work by Richtarik \cite{NEURIPS2020_c8cc6e90} has provided some involved yet insightful proofs for the convergence behavior of RR under weak assumptions. {For smooth finite-sum minimization, \cite{MENG201946} has studied decentralized stochastic gradient with shuffling and provided insights for practical data processing.} What's more, the recent work \cite{mishchenko2021proximal} has studied centralized proximal gradient descent algorithms with random reshuffling (Prox-RR) for non-smooth finite-sum optimization and extended the Prox-RR to federated learning. However, these works are not applicable to fully distributed settings and the Prox-RR needs the strong convexity assumption, which hinders its application.
{
 \par Inspired by the existing deterministic and stochastic algorithms, we develop a distributed stochastic proximal algorithm with random reshuffling (DPG-RR) for non-smooth finite-sum optimization over time-varying multi-agent networks, extending the random reshuffling to distributed non-smooth convex optimization. 
\par The contributions of this paper are summarized as follows. 
\begin{itemize}
	\item For non-smooth convex optimization, we propose a distributed stochastic algorithm DPG-RR over time-varying networks. Although there are a large number of works on proximal SGD \cite{psg_z,NEURIPS2018_PSG_NON}, few works have studied how to extend random reshuffling to solve convex optimization with non-smooth regularization. This paper extends the recent centralized proximal algorithm Prox-RR in \cite{mishchenko2021proximal} for strongly convex non-smooth optimization to general convex optimization and distributed settings. Another very related decentralized work is \cite{MENG201946}, which has provided important insights to distributed stochastic gradient descent with data shuffling under master-slave frameworks. This paper extends \cite{MENG201946} to fully distributed settings and makes it applicable for non-smooth optimization. To our best knowledge, this is the first attempt to study fully distributed stochastic proximal algorithms with random reshuffling over time-varying networks.
	\item The proposed algorithm DPG-RR needs fewer proximal operators compared with proximal SGD and is applicable for time-varying networks.   To be specific, for the non-smooth optimization with $n$ local functions, the standard proximal SGD applies the proximal operator after each stochastic gradient step \cite{NEURIPS2018_PSG_NON} and needs $n$ proximal evaluations. In contrast, the proposed algorithm DPG-RR only operates one single proximal evaluation at each iteration. In addition, with the design of multi-step consensus mapping, this proposed algorithm enables local variable estimates closer to each other, which is crucial for distributed non-smooth optimization over time-varying networks.
	Compared with the existing incremental gradient  algorithm in \cite{Vanli_2016_CDC}, the proposed random reshuffling algorithm in this paper requires a weaker assumption, where the cost function does not need to be strongly-convex.
	\item This work provides a complete and rigorous theoretical convergence analysis for the proposed DPG-RR over time-varying multi-agent networks. Due to the sampling-without-replacement mechanism of random reshuffling, the local stochastic gradients are biased estimators of the full gradient. In addition, because of scattered information in the distributed setting, there are differences between local and global objective functions, and thus local and the global gradients. To handle the differences between local and global gradients, this paper proves the summability of the error sequences of the estimated gradients. Then, we prove the convergence of the transformed stochastic algorithm by some novel proof techniques inspired by \cite{NEURIPS2020_c8cc6e90}. 
\end{itemize}
}
 \par The remainder of the paper is organized as follows.
The preliminary mathematical notations and proximal operator are introduced in section \ref{preliminaries_sec}. The problem description and the design of a distributed solver are provided in section \ref{solver_design}. The convergence performance of the proposed algorithm is analyzed in section \ref{proof_sec}. Numerical simulations are provided in section \ref{simulation} and the conclusion is made in section \ref{conclusion}.

\section{Notations and Preliminaries} \label{preliminaries_sec}
\subsection{Notations}
\par We denote $\mathbb{R}$ the set of real numbers, $\mathbb{N}$ the set of natural numbers, $\mathbb{R}^n$ the set of $n$-dimensional real column vectors and $\mathbb{R}^{n\times m}$ the set of $n$-by-$m$ real matrices, respectively. We denote $v'$ the transpose of vector $v$. In addition, $\left\|\cdot \right\|$ denotes the Euclidean norm, $\langle\cdot,\cdot\rangle$ the inner product, which is defined by $\langle a,b\rangle=a'b$. The vectors in this paper are column vectors unless otherwise stated. The notation $[m]$ denotes the set $\{1,\cdots,m\}$. For a differentiable function $f:\mathbb{R}^n \to \mathbb{R}$, $\nabla f(x)$ denotes the gradient of function $f$ with respect to $x$. For a convex non-smooth function $h:\mathbb{R}^n \to \mathbb{R}$, $\partial h(x)$ denotes the subdifferential of function $h$ at $x$. The $\varepsilon$-subdifferential of a non-smooth convex function $h$ at ${x}$, denoted by $\partial_{\varepsilon} h(x)$, is the set of vectors $y$ such that for all $q$,
\begin{align}\label{part_phi}
h(q)-h({x})\geq y'(q-{x})-\varepsilon.
\end{align} 
\subsection{Proximal Operator}
\par For a proper non-differentiable convex function $h:\mathbb{R}^n\to (-\infty,\infty]$ and a scalar $\alpha>0$, the proximal operator is defined as
\begin{align}\label{prox_ope}
	{\rm prox}_{\alpha,h}(x)={\rm argmin}_{z\in \mathbb{R}^n}{h(z)+\frac{1}{2\alpha}\|z-x\|^2}.
\end{align} 
\par The minimum is attained at a unique point $y= {\rm prox}_{\alpha,h}(x)$, which means the proximal operator is a single-valued map. In addition, it follows from the optimality condition for convex optimization that
\begin{align}
	0\in \partial h(y)+\frac{1}{\alpha}(y-x).
\end{align} 
 The following proposition presents some properties of the proximal operator.
\begin{proposition}\cite{prox_pro}\label{prox_proposition}
	Let $h:\mathbb{R}^n\to (-\infty,\infty]$ be a closed proper convex function. For a scalar $\alpha>0$ and $x\in \mathbb{R}^n$, let $y={\rm prox}_{\alpha,h}(x)$. For $x,\hat{x}\in \mathbb{R}^n$, $\|{\rm prox}_{\alpha,h}(x)-{\rm prox}_{\alpha,h}(\hat{x})\|\leq \|x-\hat{x}\|$, which is also called the nonexpansiveness of proximal operator.
\end{proposition}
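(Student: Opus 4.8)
The plan is to derive the inequality from the first-order optimality condition for the proximal operator stated just above the proposition, combined with the monotonicity of the subdifferential of a convex function. The key observation is that $\partial h$ is a monotone operator, so the proximal map is in fact \emph{firmly} nonexpansive, and plain nonexpansiveness then follows by one application of Cauchy--Schwarz.

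First I would fix $x,\hat{x}\in\mathbb{R}^n$ and set $y={\rm prox}_{\alpha,h}(x)$ and $\hat{y}={\rm prox}_{\alpha,h}(\hat{x})$. The optimality condition $0\in\partial h(y)+\frac{1}{\alpha}(y-x)$ recorded in the excerpt, applied at both points, gives
\begin{align}
\frac{1}{\alpha}(x-y)\in\partial h(y),\qquad \frac{1}{\alpha}(\hat{x}-\hat{y})\in\partial h(\hat{y}).
\end{align}

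Next I would invoke the monotonicity of $\partial h$: for any subgradients $u\in\partial h(y)$ and $v\in\partial h(\hat{y})$ one has $\langle u-v,\,y-\hat{y}\rangle\geq 0$, which is obtained by adding the two subgradient inequalities $h(\hat{y})\geq h(y)+\langle u,\hat{y}-y\rangle$ and $h(y)\geq h(\hat{y})+\langle v,y-\hat{y}\rangle$. Substituting the two subgradients above and multiplying through by $\alpha>0$ yields $\langle (x-\hat{x})-(y-\hat{y}),\,y-\hat{y}\rangle\geq 0$, that is,
\begin{align}\label{firm_nonexp}
\|y-\hat{y}\|^2\leq\langle x-\hat{x},\,y-\hat{y}\rangle.
\end{align}
Finally I would bound the right-hand side of \eqref{firm_nonexp} by the Cauchy--Schwarz inequality, $\langle x-\hat{x},\,y-\hat{y}\rangle\leq\|x-\hat{x}\|\,\|y-\hat{y}\|$, and divide by $\|y-\hat{y}\|$ (the case $\|y-\hat{y}\|=0$ being trivial) to conclude $\|y-\hat{y}\|\leq\|x-\hat{x}\|$.

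This argument is essentially routine and I do not anticipate a genuine obstacle; the only step requiring care is the monotonicity of the subdifferential, and since the proposition is cited from \cite{prox_pro} one could equally just quote it. The single substantive idea is recognizing that the residual $\frac{1}{\alpha}(x-y)$ is exactly a subgradient of $h$ at the proximal point, after which monotonicity does all the work.
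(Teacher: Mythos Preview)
Your argument is correct and is the standard route to nonexpansiveness via firm nonexpansiveness. Note that the paper does not actually prove this proposition; it simply cites \cite{prox_pro}, so there is no in-paper proof to compare against.
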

{When there exists error $\varepsilon$ in the computation of proximal operator, we denote the inexact proximal operator by ${\rm prox}_{\alpha,h}^{\varepsilon}(\cdot)$, which is defined as}
\begin{align}\label{inprox_def}
 {\rm prox}_{\alpha,h}^{\varepsilon}(x)\triangleq \Big\{\tilde{x}\big|&\frac{1}{2\alpha}\|\tilde{x}-x\|^2+h(\tilde{x})\leq \notag\\
 &\varepsilon
+\min_{z\in \mathbb{R}^d}\big\{\frac{1}{2\alpha}\|z-x\|^2+h(z)\big\}\Big\}.
\end{align}

\section{Problem Description And Algorithm Design}\label{solver_design}
{
In this paper, we aim to solve the following non-smooth finite-sum optimization problem over a multi-agent network,
\begin{align}\label{opti_pro}
\min_{x\in\mathbb{R}^d} F(x), \ F(x) =f(x)+\phi(x)=\!\frac{1}{m}\!\sum_{j=1}^m \sum_{i=1}^{n}f_{j,i}(x)+\phi(x)
\end{align}
where $x\in \mathbb{R}^d$ is the unknown decision variable, $m$ is the number of agents in the network and $n$ is the number of local samples. The global sample size of the multi-agent network is $N=mn$. The cost function $f(x)$ in \eqref{opti_pro} denotes the global cost of the multi-agent network and is differentiable. The regularization $\phi$ is non-smooth, which encodes some prior knowledge. In the multi-agent network, each agent $j$ only knows $n$ local samples and the corresponding cost functions $f_{j,i}(\cdot)$ to update the local decision variable. In addition, the non-smooth regularization $\phi$ is available to all agents. In this setting, we assume that each agent uses local cost functions and communications with its neighbors to obtain the same optimal solution $x_*$ of problem \eqref{opti_pro}.
\par In real-world applications of distributed settings, it is difficult to keep all communication links between agents connected and stable due to the the existence of networked attacks and limited bandwidth.} We consider a general time-varying multi-agent communication network $\mathcal{G}(t)=([m],\mathcal{E}(t), A(t))$, where $\mathcal{E}(t)$ denotes the set of communication edges at time $t$. At each time $t$, agent $j$ can only communicate with agent $i$ if and only if $(j,i)\in \mathcal{E}(t)$. In addition, the neighbors of agent $j$ at time $t$ are the agents that agent $j$ can communicate with at time $t$. The adjacent matrix $A(t)$ of the network is a square $m\times m$ matrix, whose elements indicate the weights of the edges in $\mathcal{E}(t)$. 
\par For the finite-sum optimization \eqref{opti_pro}, we make the following assumption.
\begin{assumption}\label{f_assump}
	\begin{itemize}
		\item[(a)] Functions $f$ and $\phi$ are convex.
		\item[(b)]Local function $f_{j,i}$ has a Lipschitz-continuous gradient
		with constant $L > 0$, i.e. for every $x,y\in \mathbb{R}^d$,
		\begin{align}\label{lcon}
		\|\nabla f_{j,i}(x)-\nabla f_{j,i}(y)\|\leq L \|x-y\|,
		\end{align}
		 while the regular function $\phi$ is non-smooth.
		\item[(c)] There exists a scalar $G_{\phi}$ such that for each sub-gradient $z\in \partial \phi(x)$, $\|z\|<G_{\phi}$, for every $x\in \mathbb{R}^d$.
		\item[(d)] There exists a scalar $G_{f}$ such that for each agent $j$ and every $x\in \mathbb{R}^d$, $\| \nabla f_{j,i}(x)\|<G_{f}$.
		\item[(e)] Further, $F$ is lower bounded by some $F_*\in \mathbb{R}$ and the optimization problem owns at least one optimal solution $x_*$.
	\end{itemize}
\end{assumption}
Assumption \ref{f_assump} is common in stochastic algorithm research \cite{MENG201946,Li2020On} and finite-sum minimization problems satisfying Assumption \ref{f_assump} arise in many applications. Here, we provide some real-world examples.
\par Example 1: In binary classification problem \cite{log_simu}, the logistic regression optimization aims to obtain a predictor $x$ to estimate the categorical variables of the testing data. When the large-scale training samples are allocated to $m$ different agents, the local cost function in \eqref{opti_pro} is a 
cross-entropy error function (CNF), $f_{j,i}(x)= \ln (1+\exp(-l_{j,i}\langle a_{j,i}, x\rangle))$, where $a_{j,i}$ and $l_{j,i}$ denote the feature vector and categorical value of the $i$th training sample at agent $j$. Then, the gradient of the convex function $f_{j,i}$ is bounded and satisfies the Lipschitz condition in \eqref{lcon}.
\par For the communication topology, the time-varying multi-agent network of the finite-sum optimization \eqref{opti_pro} satisfies {the following assumption}.
\begin{assumption}\label{net_assum}
	Consider the undirected time-varying network $\mathcal{G}(t)$ with adjacent matrices $A(t)=[a_{ij}(t)]$, $t=1,2,\cdots$
	\begin{itemize}
		\item [(a)] For each $t$, the adjacent matrix $A(t)$ is doubly stochastic.
		\item [(b)] There exists a scalar $\eta\in (0,1)$ such that $a_{jj}(t)\geq \eta$ for all $j\in [m]$. In addition, $a_{ij}(t)\geq \eta$ if $\{i,j\}\in \mathcal E(t)$, and $a_{ij}(t)=0$ otherwise.
		\item [(c)] The time-varying graph sequence $\mathcal{G}(t)$ is uniformly connected. That is, there exists an integer $B\geq 1$ such that agent $j$ sends its information to all other agents at least once every $B$ consecutive time slots.
	\end{itemize}
\end{assumption} 

\begin{remark}
	In Assumption \ref{net_assum}, part (a) is common for undirected graphs and balanced directed graphs. Part (b) means that each agent gives non-negligible weights to the local estimate and the estimates received from neighbors. Part (c) implies that the time-varying network is able to transmit information among any pair of agents in bounded time. Assumption \ref{net_assum} is widely adopted in the existing literature \cite{DGM-nedic,proximal-convex}.
\end{remark}
Next, we design a distributed proximal gradient algorithm with random reshuffling for each agent $j\in [m]$ in the multi-agent network. At the beginning of each iteration $t$, we sample indices $\pi_t^0,\cdots,\pi_t^{n-1}$ without replacement from $\{1,\cdots, n\}$ such that the generated $\pi_t=\{\pi_t^0,\cdots,\pi_t^{n-1}\}$ is a random permutation of $\{1,\cdots,n\}$. Then, we process with $n$ inner iterations of the form
\begin{align}
	x_{j,t}^{i+1}&=x_{j,t}^i-\gamma \nabla f_{j,\pi_t^i}(x_{j,t}^i), \ i\in \{0,\cdots,n-1\}, \label{x_ji_up}
\end{align}
where $\gamma$ is a constant step-size, the superscript $i$ denotes the $i$th inner iteration, the subscripts $j$ and $t$ denote the $j$th agent and $t$th outer iteration, respectively.
\par Then, to achieve the consensus between different agents, a multi-step consensus mapping is applied as
\begin{align}
v_{j,t}&=\sum_{l=1}^m \lambda_{jl,t} x_{l,t}^n\label{v_ji_up},
\end{align}
where $\lambda_{jl,t}$ is the $(j,l)$th element of matrix $\Phi(\mathbb{T}(t)+t,\mathbb{T}(t))$ for $j,l\in\{1,\cdots,m\}$.
The notation $\mathbb{T}(t)$ is the total number of communication steps before iteration $t$, and $\Phi$ is a transition matrix, defined as 
\begin{align}
\Phi(t,s)=A(t)A(t-1)\cdots A(s+1)A(s), \quad t>s\geq 0,
\end{align}
where $A(t)=[a_{ij}(t)]_{i,j=1,\cdots,m}$ is the adjacent matrix of the multi-agent network at iteration $t$. To be specific, using vector notations $v_t=[v_{j,t}]_{j=1,\cdots,m}$ and $x_t^n=[x_{j,t}^n]_{j=1,\cdots,m}$, we rewrite \eqref{v_ji_up} as 
\begin{align*}
v_t=&\Phi(\mathbb{T}(t)+t,\mathbb{T}(t))x_t^n\\
=& A(\mathbb{T}(t)+t)A(\mathbb{T}(t)+t-1)\cdots A(\mathbb{T}(t))x_t^n.
\end{align*}
Agents perform $t$ communication steps at iteration $t$. At each communication step, agents exchange their estimates $x_{j,t}^n$ and linearly combine the received estimates using weights $A(t)$. This mapping \eqref{v_ji_up} is referred as a multi-step consensus mapping because linear combinations of estimates bring the estimates of different agents close to each other.
\par Finally, for the non-smooth regular function, we use proximal operator 
\begin{align}\label{xj_up}
x_{j,t+1}={\rm prox}_{\gamma, \phi}(v_{j,t})
\end{align}
to obtain the local variable estimate $x_{j,t+1}$ of agent $j$ at the next iteration. 
\par The proposed distributed proximal stochastic gradient algorithm with random reshuffling (DPG-RR)\footnote{We suppose the local shuffling is sufficient, i.e. the permutation after shuffling is uniformly distributed.}  is formally summarized in Algorithm \ref{pg_rr_algo}.
\begin{algorithm}[H]
	\caption{DPG-RR for agent $j\in [m]$}
	\label{pg_rr_algo}
	\begin{algorithmic}[1]  
		\State (S.1) Initialization:
		\State Step-size $\gamma>0$; initial vector $x_{j,0}\in \mathbb{R}^d$ for $j\in [m]$; number of epoches $T$.
		\State (S.2) Iterations:
		\For {$t=0,1,\cdots,T-1$}
		\State Generate a random permutation $\pi_t=(\pi_t^0,\cdots,\pi_t^{n-1})$
		\State $x_{j,t}^0=x_{j,t}$
		\For {$i=0,1,\cdots,n-1$}
		\State 
		\begin{align*}
		x_{j,t}^{i+1}=x_{j,t}^i-\gamma \nabla f_{j,\pi_t^i}(x_{j,t}^i)
		\end{align*}
		\EndFor
		\State 
		\begin{align*}
		v_{j,t}&=\sum_{l=1}^m \lambda_{jl,t} x_{l,t}^n\\
		x_{j,t+1}&={\rm prox}_{\gamma, \phi}(v_{j,t})
		\end{align*}
		\EndFor
	\end{algorithmic}
\end{algorithm}
\begin{remark}
	Compared with the standard proximal SGD, where the proximal operator is applied after each stochastic gradient descent, the proposed DPG-RR only operates one single proximal evaluation at each iteration, which is $\frac{1}{n}$ of the proximal evaluations in the proximal SGD. Therefore, the proposed DPG-RR owns lower computational burden and is applicable for the non-smooth optimization where the proximal operator is expensive to calculate. 
\end{remark}

\begin{remark}\label{sample_remark}
	 Compared with the fully random sampling procedure, which is used in the well-known stochastic gradient descent method (SGD), one main advantage of the random reshuffling procedure is its intrinsic ability to avoid cache misses when reading the data from memory, enabling a faster implementation. In addition, a random reshuffling algorithm usually converges in fewer iterations than distributed SGD\cite{why_rand_good,mishchenko2021proximal}. It is due to that the RR procedure learns from each sample in each epoch while the SGD can miss learning from some samples in any given epoch. The gradient method with the incremental sampling procedure is a special case of the random reshuffling gradient method, which has been investigated in \cite{NEURIPS2020_c8cc6e90}. 
\end{remark}
\begin{remark}
	The work in \cite{MENG201946} has studied the convergence rates for strongly convex, convex and non-convex optimization under different shuffling procedures. \cite{MENG201946} has provided a unified analysis framework for three shuffling procedures, including global shuffling, local shuffling and insufficient shuffling\footnote{If the training samples of machine learning task are centrally stored, the global shuffling is usually applied, where the entire training samples are shuffled after each epoch. Otherwise, local shuffling, where each agent randomly shuffles local training samples after each epoch, is preferred to reduce communication burden.}. Unlike the work \cite{MENG201946}, this paper focuses on non-smooth optimization with a local shuffling procedure, which is more applicable for the distributed setting. In addition, this proposed distributed DPG-RR owns a comparable convergence rate and allows non-smooth functions and time-varying communications between different agents for convex finite-sum optimization.
\end{remark}
The convergence performance of the proposed algorithm is discussed in the following theorem and the proof is provided in the next section.
\begin{theorem}\label{con_theo}
 Suppose that Assumptions \ref{f_assump}, \ref{net_assum} hold and the step-size $\gamma=M/{\sqrt{T}},\ M\leq \sqrt{6}/6Ln$.
Then, Algorithm \ref{pg_rr_algo} possesses the properties:
{
\begin{itemize}
	\item [(1)] The local variable estimates $x_{j,t}$ achieve consensus and $\lim_{t\to \infty}\|x_{j,t}-\bar{x}_t\|=0$, where $\bar{x}_t\triangleq \frac{1}{m} \sum_{j=1}^m x_{j,t}$.
	\item [(2)] In addition, $\mathbb{E}[F(\hat{x}_T)-F_*]= \mathcal{O}(\frac{1}{T}+\frac{1}{\sqrt{T}})$, where $\hat{x}_T\triangleq \frac{1}{mT}\sum_{t=1}^T \sum_{j=1}^m x_{j,t}$.
\end{itemize} 
}
\end{theorem}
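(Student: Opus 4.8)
The plan is to prove the two claims in sequence, since the consensus rate from part (1) is precisely the ingredient that lets the distributed errors be absorbed into the rate of part (2). Throughout I write $e_{j,t}=x_{j,t}-\bar{x}_t$ for the disagreement, $E_t=\sum_{j=1}^m\|e_{j,t}\|$, $f_j=\sum_{i=1}^n f_{j,i}$ for the local aggregate (so $f=\frac1m\sum_j f_j$ and $\nabla f_j$ is $nL$-Lipschitz by \eqref{lcon}), and $g_{j,t}=\sum_{i=0}^{n-1}\nabla f_{j,\pi_t^i}(x_{j,t}^i)$, so that the inner loop \eqref{x_ji_up} telescopes to $x_{j,t}^n=x_{j,t}-\gamma g_{j,t}$. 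The three structural tools are nonexpansiveness of the prox (Proposition \ref{prox_proposition}), geometric mixing, and convexity. Under Assumption \ref{net_assum} there are constants $C>0$, $\beta\in(0,1)$ with $\big|[\Phi(\mathbb{T}(t)+t,\mathbb{T}(t))]_{jl}-\tfrac1m\big|\le C\beta^{t}$, since the map \eqref{v_ji_up} at epoch $t$ performs $t$ communication rounds; this is the classical bound for doubly stochastic, uniformly connected graphs.

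\textbf{Part (1): consensus.} Because $\Phi$ is doubly stochastic, averaging \eqref{v_ji_up} gives $\bar{v}_t=\bar{x}_t-\gamma\bar{g}_t$, and since $\sum_l(\lambda_{jl,t}-\tfrac1m)=0$ I may write $v_{j,t}-\bar{v}_t=\sum_l(\lambda_{jl,t}-\tfrac1m)(x_{l,t}^n-\bar{x}_t^n)$, so $\|v_{j,t}-\bar{v}_t\|\le C\beta^{t}\sum_l\|x_{l,t}^n-\bar{x}_t^n\|$. Using $x_{l,t}^n-\bar{x}_t^n=e_{l,t}-\gamma(g_{l,t}-\bar{g}_t)$ and the bound $\|g_{l,t}\|\le nG_f$ (Assumption \ref{f_assump}(d)) gives $\sum_l\|x_{l,t}^n-\bar{x}_t^n\|\le E_t+2\gamma mnG_f$. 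Applying nonexpansiveness of the prox against the common point ${\rm prox}_{\gamma,\phi}(\bar v_t)$ and comparing with the average yields a recursion $E_{t+1}\le 2Cm\beta^{t}(E_t+2\gamma mnG_f)$. Since $\beta^{t}\to0$ both the contraction coefficient and the forcing term vanish, giving $E_t\to0$, hence $\|x_{j,t}-\bar{x}_t\|\to0$. The same recursion, together with the super-geometric decay of $\prod_{k<t}\beta^{k}$, shows $\sum_t E_t<\infty$ with $\sum_t E_t=\mathcal{O}(E_0)+\mathcal{O}(\gamma)$; this summability is exactly what part (2) consumes.

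\textbf{Part (2): rate.} The key reduction is that the averaged iterate obeys an exact perturbed subgradient step. The optimality condition of the prox supplies $s_{j,t+1}\in\partial\phi(x_{j,t+1})$ with $x_{j,t+1}=v_{j,t}-\gamma s_{j,t+1}$; averaging and using double stochasticity gives
\begin{align*}
\bar{x}_{t+1}=\bar{x}_t-\gamma(\bar{g}_t+\bar{s}_{t+1}),\qquad \bar{s}_{t+1}=\frac1m\sum_{j=1}^m s_{j,t+1}.
\end{align*}
Expanding $\|\bar{x}_{t+1}-x_*\|^2$ gives a cross term $-2\gamma\langle\bar{g}_t+\bar{s}_{t+1},\bar{x}_t-x_*\rangle$ and a second-order term $\le\gamma^2(nG_f+G_\phi)^2$ by Assumption \ref{f_assump}(c)--(d). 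Convexity of $f$ gives $\langle\nabla f(\bar{x}_t),\bar{x}_t-x_*\rangle\ge f(\bar{x}_t)-f(x_*)$, and $s_{j,t+1}\in\partial\phi(x_{j,t+1})$ with Jensen gives $\langle\bar{s}_{t+1},\bar{x}_t-x_*\rangle\ge\phi(\bar{x}_{t+1})-\phi(x_*)-\frac{G_\phi}{m}\sum_j\|\bar{x}_t-x_{j,t+1}\|$. The residuals are the reshuffling bias $\|\bar{g}_t-\nabla f(\bar{x}_t)\|\le Ln^2\gamma G_f+\frac{nL}{m}E_t$ (from \eqref{lcon} and $\|x_{j,t}^i-x_{j,t}\|\le n\gamma G_f$), the consensus gaps $\|\bar{x}_t-x_{j,t+1}\|\le\mathcal{O}(\gamma)+\|e_{j,t+1}\|$, and the time-index mismatch $|f(\bar{x}_{t+1})-f(\bar{x}_t)|=\mathcal{O}(\gamma)$. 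Treating the bias--distance product by Young's inequality converts $\gamma\cdot Ln^2\gamma G_f\cdot\|\bar{x}_t-x_*\|$ into $\gamma^2\|\bar{x}_t-x_*\|^2+\mathcal{O}(\gamma^2)$, so
\begin{align*}
\|\bar{x}_{t+1}-x_*\|^2\le(1+\gamma^2)\|\bar{x}_t-x_*\|^2-2\gamma[F(\bar{x}_{t+1})-F_*]+\mathcal{O}(\gamma^2)+\varepsilon_t,
\end{align*}
where $\varepsilon_t$ collects the consensus contributions and, by part (1), satisfies $\sum_t\varepsilon_t=\mathcal{O}(\gamma)+\mathcal{O}(\gamma^2 T)$. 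Since $\gamma^2T=M^2$ is bounded, $(1+\gamma^2)^T\to e^{M^2}$ keeps $\|\bar{x}_t-x_*\|$ uniformly bounded; telescoping, dividing by $2\gamma T$, and applying Jensen to $\hat{x}_T=\frac1T\sum_{t=1}^T\bar{x}_t$ gives
\begin{align*}
\mathbb{E}[F(\hat{x}_T)-F_*]\le\frac{\|\bar{x}_0-x_*\|^2}{2\gamma T}+\mathcal{O}(\gamma)+\frac{\sum_t\varepsilon_t}{2\gamma T}.
\end{align*}
With $\gamma=M/\sqrt{T}$ the first two terms are $\mathcal{O}(1/\sqrt{T})$ and the last is $\mathcal{O}(1/T)+\mathcal{O}(1/\sqrt{T})$, giving the claimed $\mathcal{O}(1/T+1/\sqrt{T})$ rate.

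\textbf{Main obstacle.} The delicate point is the simultaneous control of three coupled error sources: the reshuffled gradient $g_{j,t}$ is a \emph{biased} estimator of $\nabla f_j$, the local estimates disagree, and the proximal map is nonlinear and nonsmooth. The restriction $M\le\sqrt{6}/6Ln$ is what keeps the inner-loop drift $\|x_{j,t}^i-x_{j,t}\|$ small enough that the bias is genuinely $\mathcal{O}(\gamma)$ and the growth factor $(1+\gamma^2)^T$ stays bounded. I expect the tightest step to be bounding the reshuffling bias in expectation through the permutation structure, following the transformation of \cite{NEURIPS2020_c8cc6e90}, rather than through the crude deterministic estimate above, so that the constants in the $\mathcal{O}(\gamma)$ term are sharp; establishing the summability $\sum_t E_t<\infty$ in part (1) and threading it through $\varepsilon_t$ is the glue that prevents the distributed errors from degrading the centralized rate.
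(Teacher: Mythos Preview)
Your approach is broadly correct but follows a genuinely different route from the paper. For part~(1), the paper does not track the disagreement $E_t$ directly; instead it bounds $\sum_j\|x_{j,t}^n\|$ by a quadratic polynomial in $t$ (Lemma~\ref{qbound_lemma}) and then invokes summability of polynomial-geometric series (Lemma~\ref{poly_lemma}) on \eqref{ite_c}. Your direct recursion $E_{t+1}\le 2Cm\beta^{t}(E_t+2\gamma mnG_f)$ is cleaner and gives the same summability with less machinery. For part~(2), the paper casts $\bar x_{t+1}$ as an $\varepsilon_{t+1}$-inexact proximal step (Proposition~\ref{aver_up}, Lemma~\ref{pk_lemma}) and works with $\bar d_{t+1}\in\partial_{\varepsilon_{t+1}}\phi(\bar x_{t+1})$, then uses the three-point Bregman identity $\langle\bar x_{t+1}-x_*,\nabla\mathbf f_{\pi_t^i}(\bar x_t^i)\rangle=[\mathbf f_{\pi_t^i}(\bar x_{t+1})-\mathbf f_{\pi_t^i}(x_*)]+D_{\mathbf f_{\pi_t^i}}(x_*,\bar x_t^i)-D_{\mathbf f_{\pi_t^i}}(\bar x_{t+1},\bar x_t^i)$ together with the forward per-epoch deviation bound (Lemma~\ref{in_V_lem}), which exploits the without-replacement variance formula (Lemma~\ref{xavelem}). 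This is where the step-size restriction $\gamma\le 1/(2\sqrt{3}Ln)$ actually enters: it makes $1-12\gamma^2L^2n^2\ge0$ so the nonnegative $\sum_i D_{\mathbf f_{\pi_t^i}}(x_*,\bar x_t^i)$ term can be dropped. Your elementary route---averaging the exact subgradients $s_{j,t+1}\in\partial\phi(x_{j,t+1})$, bounding the bias deterministically by $Ln^2\gamma G_f+\tfrac{nL}{m}E_t$, and absorbing a $(1+\gamma^2)$ growth factor---avoids all of these lemmas, but at a cost: your argument never uses the permutation randomness, so the expectation in the statement is vacuous and your constants depend on $G_f$ rather than the shuffling variance $\sigma_*^2$, and your identification of where the step-size condition matters is not quite right (your drift bound $\|x_{j,t}^i-x_{j,t}\|\le n\gamma G_f$ holds for any $\gamma$).

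One point does need tightening. Your $\varepsilon_t$ contains the piece $2\gamma\cdot\tfrac{nL}{m}E_t\cdot\|\bar x_t-x_*\|$ from the consensus part of the bias, so the claimed bound $\sum_t\varepsilon_t=\mathcal{O}(\gamma)+\mathcal{O}(\gamma^2T)$ presupposes the uniform bound on $\|\bar x_t-x_*\|$ that you derive \emph{from} the recursion involving $\varepsilon_t$. This circularity is exactly what the paper handles with Lemma~\ref{usseq} (Schmidt et al.'s self-referential inequality); you can break it either by that lemma or by a second Young step $2\gamma\tfrac{nL}{m}E_t\|\bar x_t-x_*\|\le\gamma^2\|\bar x_t-x_*\|^2+(\tfrac{nL}{m}E_t)^2$, which folds into the growth factor and leaves the summable $\sum_tE_t^2$ in $\varepsilon_t$. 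With that fix, your argument goes through.
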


\begin{remark}
	If the local sample size $n$ increases, the convergence rate of the proposed algorithm will become slower. It follows from the proof of Theorem \ref{con_theo} that the constant term in the convergence rate is proportional to the third-order polynomial of $n$. Since the proposed algorithm is distributed, we can reduce the effect of local sample size $n$ on the convergence rate by using more agents for large-scale problems. Whereas more agents may require higher communication costs over multi-agent networks. In addition, the convergence rate also depends on other factors, such as the Lipschitz constant of objective functions, the upper bounds of subgradients, the shuffling variance, and the bounded intercommunication interval. 
\end{remark}
{
\begin{remark}
	Theorem \ref{con_theo} extends that of the centralized Prox-RR in \cite{mishchenko2021proximal} to distributed settings and non-strongly convex optimization.  Compared with the popular stochastic sub-gradient method with a diminishing step-size for non-smooth optimization, whose convergence rate is $\mathcal{O}(\frac{\log(T)}{\sqrt{T}})$ \cite{sgd_nonsmooth_opti}, the proposed DPG-RR has a faster convergence rate. 
\end{remark}
}
\section{Theoretical analysis}\label{proof_sec}
In this section, we present theoretical proofs for the convergence performance of the proposed algorithm. The proof sketch includes three parts. 
\begin{itemize}
	\item[(a)] Transform the DPG-RR to an algorithm with some error sequences, and prove that the error sequences are summable\footnote{ Let $\{a_n\}$ be a sequence of real numbers. The series $\sum_{k=1}^{\infty} a_k$ is summable if and only if the sequence $x_n\triangleq \sum_{k=1}^n a_k,\ n\in \mathbb{N}$ converges.}.
	\item[(b)] Estimate the bound of the forward per-epoch deviation of the DPG-RR.
	\item[(c)] Prove the consensus property of local variables and the convergence performance in Theorem \ref{con_theo}.
\end{itemize}
Each part is discussed in detail in the subsequent subsections.
To present the transformation, we define
\begin{align*}
&\bar{x}_{t}^{i}\triangleq \frac{1}{m} \sum_{j=1}^m x_{j,t}^i  \in \mathbb{R}^d,\ \bar{x}_t\triangleq \frac{1}{m} \sum_{j=1}^m x_{j,t}\in \mathbb{R}^d, \\ &\bar{v}_{t}\triangleq \frac{1}{m} \sum_{j=1}^m v_{j,t} \in \mathbb{R}^d, \  z_{t+1}\triangleq {\rm prox}_{\gamma, \phi}(\bar{v}_t).
\end{align*} 
\subsection{Transformation of DPG-RR and the summability of error sequences}
\begin{proposition}\label{aver_up}
	Suppose Assumptions \ref{f_assump} and \ref{net_assum} hold. The average variable of Algorithm \ref{pg_rr_algo} satisfies
	\begin{align}
	\bar{x}_{t+1}&\in {\rm prox}_{\gamma, \phi}^{\varepsilon_{t+1}}\Big(\bar{x}_t-\gamma \big(\sum_{i=0}^{n-1}(\nabla \mathbf{f}_{\pi_t^i}(\bar{x}_{t}^i)+e_t^i)\big)\Big),\label{xave_up}
	\end{align}
	where $\mathbf{f}_{\pi_t^i}(x)=\frac{1}{m}\sum_{j=1}^m  f_{j,\pi_t^i}(x)$, and the error sequences $\{e_t^i\}_{t=0}^{\infty}$ and $\{\varepsilon_{t+1}\}_{t=0}^{\infty}$ satisfy
\begin{subequations}
	\begin{align}
	\|e_t^i\|&\leq \frac{L}{m} \sum_{j=1}^m \|x_{j,t}^i-\bar{x}_t^i\|,\label{norm_e}\\
	\varepsilon_{t+1}&\leq  \frac{2G_{\phi}}{m}\sum_{j=1}^m \|v_{j,t} \!-\!\bar{v}_t\|\!+\!\frac{1}{2\gamma}(\frac{1}{m}\sum_{j=1}^m \|v_{j,t} \!-\!\bar{v}_t\|)^2.\label{norm_vare}
	\end{align}	
\end{subequations}

\end{proposition}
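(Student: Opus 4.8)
The plan is to track the agent-averaged dynamics of Algorithm \ref{pg_rr_algo} through one epoch and to isolate the two sources of inexactness that surface when the per-agent quantities are replaced by their averages: a drift term $e_t^i$, coming from the fact that each agent linearizes its gradient at its own iterate $x_{j,t}^i$ rather than at the average $\bar{x}_t^i$, and a proximal inexactness $\varepsilon_{t+1}$, coming from the fact that the average of the proximal outputs is not the proximal output of the average. First I would average the inner gradient recursion \eqref{x_ji_up} over $j$, which gives $\bar{x}_t^{i+1}=\bar{x}_t^i-\gamma\frac{1}{m}\sum_{j=1}^m\nabla f_{j,\pi_t^i}(x_{j,t}^i)$. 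Defining $e_t^i:=\frac{1}{m}\sum_{j=1}^m(\nabla f_{j,\pi_t^i}(x_{j,t}^i)-\nabla f_{j,\pi_t^i}(\bar{x}_t^i))$ so that $\frac{1}{m}\sum_{j=1}^m\nabla f_{j,\pi_t^i}(x_{j,t}^i)=\nabla\mathbf{f}_{\pi_t^i}(\bar{x}_t^i)+e_t^i$, the bound \eqref{norm_e} follows immediately from the triangle inequality and the $L$-Lipschitz continuity of each $\nabla f_{j,\pi_t^i}$ in Assumption \ref{f_assump}(b).

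Telescoping over $i=0,\dots,n-1$ and using $\bar{x}_t^0=\bar{x}_t$ then yields $\bar{x}_t^n=\bar{x}_t-\gamma\sum_{i=0}^{n-1}(\nabla\mathbf{f}_{\pi_t^i}(\bar{x}_t^i)+e_t^i)$, which is exactly the argument of the proximal operator appearing in \eqref{xave_up}. It remains to check that the consensus step preserves this average: since $\Phi(\mathbb{T}(t)+t,\mathbb{T}(t))$ is a product of the doubly stochastic matrices $A(\cdot)$ (Assumption \ref{net_assum}(a)) and is therefore itself doubly stochastic, averaging \eqref{v_ji_up} over $j$ and interchanging the order of summation gives $\bar{v}_t=\frac{1}{m}\sum_{l=1}^m\big(\sum_{j=1}^m\lambda_{jl,t}\big)x_{l,t}^n=\bar{x}_t^n$. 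Hence the inner point fed into the proximal step is precisely $\bar{v}_t$.

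The main obstacle is the final step, quantifying the proximal inexactness, because the proximal map is nonlinear and $\bar{x}_{t+1}=\frac{1}{m}\sum_{j=1}^m{\rm prox}_{\gamma,\phi}(v_{j,t})$ generally differs from the exact point $z_{t+1}={\rm prox}_{\gamma,\phi}(\bar{v}_t)$. To verify $\bar{x}_{t+1}\in{\rm prox}_{\gamma,\phi}^{\varepsilon_{t+1}}(\bar{v}_t)$ in the sense of definition \eqref{inprox_def}, I would bound $\Psi(\bar{x}_{t+1})-\Psi(z_{t+1})$, where $\Psi(y):=\frac{1}{2\gamma}\|y-\bar{v}_t\|^2+\phi(y)$ and $z_{t+1}$ is the exact minimizer of $\Psi$. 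Writing $\delta_t:=\frac{1}{m}\sum_{j=1}^m\|v_{j,t}-\bar{v}_t\|$, convexity of the norm together with the nonexpansiveness of ${\rm prox}_{\gamma,\phi}$ (Proposition \ref{prox_proposition}) first gives $\|\bar{x}_{t+1}-z_{t+1}\|\leq\frac{1}{m}\sum_{j=1}^m\|{\rm prox}_{\gamma,\phi}(v_{j,t})-{\rm prox}_{\gamma,\phi}(\bar{v}_t)\|\leq\delta_t$.

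To finish, I would expand $\Psi(\bar{x}_{t+1})-\Psi(z_{t+1})$ about $z_{t+1}$; the quadratic part produces $\frac{1}{2\gamma}\|\bar{x}_{t+1}-z_{t+1}\|^2+\frac{1}{\gamma}\langle\bar{x}_{t+1}-z_{t+1},z_{t+1}-\bar{v}_t\rangle$, and the optimality condition $\frac{1}{\gamma}(\bar{v}_t-z_{t+1})\in\partial\phi(z_{t+1})$ lets me replace the cross term by $-\langle\bar{x}_{t+1}-z_{t+1},s\rangle$ for some subgradient $s\in\partial\phi(z_{t+1})$ with $\|s\|\leq G_{\phi}$ (Assumption \ref{f_assump}(c)). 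Combining the Cauchy--Schwarz estimate of this cross term with the Lipschitz bound $\phi(\bar{x}_{t+1})-\phi(z_{t+1})\leq G_{\phi}\|\bar{x}_{t+1}-z_{t+1}\|$ (again from bounded subgradients), every error contribution collapses to $2G_{\phi}\|\bar{x}_{t+1}-z_{t+1}\|+\frac{1}{2\gamma}\|\bar{x}_{t+1}-z_{t+1}\|^2\leq 2G_{\phi}\delta_t+\frac{1}{2\gamma}\delta_t^2$, which is exactly the right-hand side of \eqref{norm_vare}. Setting $\varepsilon_{t+1}$ equal to this quantity establishes \eqref{xave_up} and completes the argument.
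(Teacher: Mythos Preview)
Your proposal is correct and follows essentially the same route as the paper's proof: the same definition of $e_t^i$ bounded via Lipschitz continuity, the same exact-proximal comparison point $z_{t+1}={\rm prox}_{\gamma,\phi}(\bar v_t)$, the same nonexpansiveness bound $\|\bar x_{t+1}-z_{t+1}\|\le\delta_t$, and the same use of the optimality condition $\frac{1}{\gamma}(\bar v_t-z_{t+1})\in\partial\phi(z_{t+1})$ together with the subgradient bound $G_\phi$ to arrive at $2G_\phi\delta_t+\frac{1}{2\gamma}\delta_t^2$. Your write-up is in fact slightly more explicit than the paper's, since you spell out the double-stochasticity argument giving $\bar v_t=\bar x_t^n$, which the paper uses implicitly.
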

\begin{proof}
	See Appendix \ref{center_pro}.
\end{proof}
%

Then, inspired by Section III.B of  \cite{proximal-convex}, we discuss the summabilities of error sequences $\{\gamma\|e_t\|\}$ and $\{\varepsilon_t\}$ in the following proposition. 
\begin{proposition}\label{e_sum_pro}
	Under Assumptions \ref{f_assump} and \ref{net_assum}, error sequences $\{e_{t}^i\}$ and $\{\varepsilon_t\}$ satisfy $ \left\|e_t^i\right\|\leq b_{e,t}+\gamma C_0$, ${\varepsilon_t}\leq b_{\varepsilon,t}$ and $ \sqrt{\varepsilon_t}\leq b_{\sqrt{\varepsilon},t}$, where $b_{e,t}$, $b_{\varepsilon,t}$ and $b_{\sqrt{\varepsilon},t}$ are polynomial-geometric sequences and $C_0$ is a constant.
\end{proposition}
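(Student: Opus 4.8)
The plan is to trace both error sequences back to the network disagreement and then exploit the geometric ergodicity of the transition matrices. By Proposition~\ref{aver_up}, bound \eqref{norm_e} controls $\|e_t^i\|$ through the inner-loop disagreement $\frac1m\sum_j\|x_{j,t}^i-\bar x_t^i\|$, and bound \eqref{norm_vare} controls $\varepsilon_{t+1}$ through the post-mixing disagreement $\frac1m\sum_j\|v_{j,t}-\bar v_t\|$; hence it suffices to bound these two quantities by polynomial--geometric sequences. First I would introduce the aggregate consensus error $\Delta_t\triangleq\sum_{j=1}^m\|x_{j,t}-\bar x_t\|$ and observe from \eqref{x_ji_up} that the inner gradient steps perturb the disagreement only by $O(\gamma)$: since $x_{j,t}^i=x_{j,t}-\gamma\sum_{s=0}^{i-1}\nabla f_{j,\pi_t^s}(x_{j,t}^s)$ and $\|\nabla f_{j,i}(\cdot)\|<G_f$ by Assumption~\ref{f_assump}(d), one gets $\sum_j\|x_{j,t}^i-\bar x_t^i\|\le\Delta_t+2m\gamma nG_f$. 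Substituting into \eqref{norm_e} already yields $\|e_t^i\|\le\frac{L}{m}\Delta_t+2\gamma nLG_f$, so that $b_{e,t}=\frac{L}{m}\Delta_t$ and the non-decaying constant is $C_0=2nLG_f$.

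The core step is to show $\Delta_t$ is polynomial--geometric. The workhorse, borrowed from the time-varying consensus analysis of \cite{DGM-nedic,proximal-convex}, is that Assumption~\ref{net_assum} (double stochasticity, positive diagonal weights $\eta$, and $B$-uniform connectivity) furnishes a constant $C>0$ and a rate $\beta\in(0,1)$ with $|\lambda_{jl,t}-\frac1m|\le C\beta^{t}$; the exponent is $t$ precisely because the multi-step mapping \eqref{v_ji_up} performs $t$ communication rounds at epoch $t$, which is the mechanism that drives the disagreement to zero. Double stochasticity gives $\bar v_t=\bar x_t^n$ and makes the coefficients $\lambda_{jl,t}-\frac1m$ sum to zero over $l$, so subtracting the common vector $\bar x_t^n$ yields $v_{j,t}-\bar v_t=\sum_l(\lambda_{jl,t}-\frac1m)(x_{l,t}^n-\bar x_t^n)$; combined with the $O(\gamma)$ inner-drift bound this gives $\sum_j\|v_{j,t}-\bar v_t\|\le mC\beta^{t}(\Delta_t+2m\gamma nG_f)$. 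Applying the nonexpansiveness of ${\rm prox}_{\gamma,\phi}$ (Proposition~\ref{prox_proposition}) to $x_{j,t+1}={\rm prox}_{\gamma,\phi}(v_{j,t})$ then produces the one-step recursion $\Delta_{t+1}\le 2\sum_j\|v_{j,t}-\bar v_t\|\le 2mC\beta^{t}(\Delta_t+2m\gamma nG_f)$. Unrolling it shows that $\Delta_t$ is dominated by the most recent $O(\gamma)$ injection scaled by the mixing factor, i.e. $\Delta_t$ is bounded by a polynomial--geometric sequence of order $\gamma\beta^{t}$; hence $b_{e,t}=\frac{L}{m}\Delta_t$ is polynomial--geometric as claimed.

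For the proximal errors I would feed $\frac1m\sum_j\|v_{j,t}-\bar v_t\|\le C\beta^{t}(\Delta_t+2m\gamma nG_f)=O(\gamma\beta^{t})$ into \eqref{norm_vare}: the linear term contributes $\frac{2G_\phi}{m}O(\gamma\beta^{t})$ and the quadratic term $\frac1{2\gamma}O(\gamma^2\beta^{2t})=O(\gamma\beta^{2t})$, so $\varepsilon_{t+1}\le b_{\varepsilon,t+1}$ with $b_{\varepsilon,t}$ a sum of geometric sequences of ratios $\beta$ and $\beta^2$, i.e. polynomial--geometric. For the square-root bound I would use $\sqrt{a+b}\le\sqrt a+\sqrt b$: the linear part gives $\sqrt{O(\gamma\beta^{t})}=O(\sqrt\gamma\,\beta^{t/2})$ and the quadratic part gives $O(\sqrt\gamma\,\beta^{t})$, both geometric (with bases $\sqrt\beta$ and $\beta$), so their sum bounds $\sqrt{\varepsilon_{t+1}}$ by a polynomial--geometric $b_{\sqrt\varepsilon,t+1}$, which is exactly the form needed for the summability of $\{\sqrt{\varepsilon_t}\}$ invoked later.

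The main obstacle I anticipate is establishing, with explicit $C$ and $\beta$, the geometric mixing estimate $|\lambda_{jl,t}-\frac1m|\le C\beta^{t}$ for the multi-step, time-varying transition matrices under uniform connectivity; this is the quantitative heart of the argument and the only place where Assumption~\ref{net_assum}(c) and the weight floor $\eta$ are used. A secondary subtlety is the bookkeeping that separates the genuinely decaying part $b_{e,t}$ from the non-decaying drift $\gamma C_0$: in \eqref{norm_e} the inner-loop disagreement is measured before mixing, so its $O(\gamma)$ component is not multiplied by $\beta^{t}$ and persists, whereas in \eqref{norm_vare} the same drift is post-multiplied by the mixing factor and therefore decays. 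Verifying that the polynomial--geometric class is closed under the square root and the squaring appearing in \eqref{norm_vare} (operations that merely change the geometric base) is then routine.
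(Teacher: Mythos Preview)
Your argument is correct, and it follows a genuinely different route from the paper's. The paper never centers the post-mixing identity: it bounds $\|v_{j,t}-\bar v_t\|\le\Gamma\Xi^t\sum_l\|x_{l,t}^n\|$ using the \emph{raw} norms $\|x_{l,t}^n\|$, and then has to prove a separate induction lemma (Lemma~\ref{qbound_lemma}) showing that $\sum_l\|x_{l,t}^n\|$ grows at most quadratically in $t$; the product of the geometric factor $\Xi^t$ and this quadratic yields the polynomial--geometric bounds $b_{e,t},b_{\varepsilon,t},b_{\sqrt\varepsilon,t}$ as stated. You instead exploit $\sum_l(\lambda_{jl,t}-\tfrac1m)=0$ to subtract $\bar x_t^n$, reducing everything to the disagreement $\Delta_t$, and close a self-referential recursion $\Delta_{t+1}\le O(\beta^t)(\Delta_t+O(\gamma))$; this bypasses Lemma~\ref{qbound_lemma} entirely and produces a sharper, purely geometric bound rather than polynomial times geometric. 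One small caveat: your assertion that $\Delta_t=O(\gamma\beta^t)$ is slightly too strong, since the contribution of $\Delta_0$ is not proportional to $\gamma$; the unrolled recursion gives $\Delta_t\le(\prod_{s<t}a_s)\Delta_0+O(\gamma)\sum_{k<t}\prod_{k\le s<t}a_s$ with $a_s=2mC\beta^s$, so the $\Delta_0$ term decays super-geometrically but its prefactor is not $O(\gamma)$. This does not matter for the proposition (any geometric sequence is polynomial--geometric), but be careful if you later need the explicit $\gamma$-scaling.
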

\begin{proof}
	See Appendix \ref{summ_proof}.
\end{proof}
\subsection{Boundness of the forward per-epoch deviation}
\par Before presenting the boundness analysis, we introduce some necessary quantities for the random reshuffling technique.
\begin{definition}{ \cite{NEURIPS2020_c8cc6e90}}
	For any $i$, the quantity $D_{f_i}(x,y)\triangleq f_i(x)-f_i(y)-\langle \nabla f_i(y),x-y\rangle$ is the Bregman divergence between $x$ and $y$ associated with function $f_i$.
\end{definition}
If the function $f_i$ is $L$-smooth, then for all $x,y\in \mathbb{R}^d$, $D_{f_i}(x,y)\leq \frac{L}{2}\|x-y\|^2$. The difference between the gradients of a convex and $L$-smooth function $f_i$ satisfies
\begin{align}\label{fnablf}
\|\nabla f_i(x)-\nabla f_i(y)\|^2 \leq 2L D_{f_i}(x,y).
\end{align}
\par In addition, the forward per-epoch deviation of the proposed algorithm is introduced as follows.
\begin{definition}
	The forward per-epoch deviation of DPG-RR for the $t$-th epoch is defined as
	\begin{align}\label{vt_def}
	\mathcal{V}_t\triangleq \sum_{i=0}^{n-1}\|\bar{x}_t^i-\bar{x}_{t+1}\|^2.
	\end{align}
\end{definition}

\par Now, we show that the forward per-epoch deviation $\mathcal{V}_t$ of DPG-RR is upper bounded in the following lemma.
\begin{lemma}\label{in_V_lem}
 If Assumption \ref{f_assump} holds, then the forward per-epoch deviation $\mathcal{V}_t$ satisfies
	\begin{align}
	\mathbb{E}[\mathcal{V}_t]\leq &24\gamma^2Ln^2\sum_{i=0}^{n-1} \mathbb{E}[D_{\mathbf{f}_{\pi_t^i}}(x_*,\bar{x}_t^i)]+3\gamma^2 n^2\sigma_*^2+9\gamma^2 n G_f^2\notag\\
	&+6\gamma^2 n G_{\phi}^2+12 n^3 (\gamma^2 b_{e,t}^2+\gamma^4 C_0^2)+4n\gamma b_{\varepsilon,t},
	\end{align}
where $\mathcal{V}_t$ is defined in \eqref{vt_def}, $\sigma_*^2\triangleq \frac{1}{n}\sum_{i=1}^{n} \|\nabla \mathbf{f}_{\pi_t^i}(x_*)-\frac{1}{n}\nabla f(x_*)\|^2$, $b_{e,t}$ and $b_{\varepsilon,t}$ are defined in \eqref{bet} and \eqref{bvapt}, respectively.
\end{lemma}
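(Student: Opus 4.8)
The plan is to control each term $\|\bar{x}_t^i-\bar{x}_{t+1}\|^2$ by first replacing the inexact proximal point $\bar{x}_{t+1}$ with the exact one $z_{t+1}={\rm prox}_{\gamma,\phi}(\bar{v}_t)$ and then telescoping the averaged inner recursion. Since the consensus weights form a doubly stochastic matrix, averaging \eqref{v_ji_up} gives $\bar{v}_t=\bar{x}_t^n$, so $z_{t+1}={\rm prox}_{\gamma,\phi}(\bar{x}_t^n)$; and because $z\mapsto\frac{1}{2\gamma}\|z-\bar{x}_t^n\|^2+\phi(z)$ is $\frac{1}{\gamma}$-strongly convex, the defining inequality \eqref{inprox_def} of the inexact prox yields $\|\bar{x}_{t+1}-z_{t+1}\|^2\le 2\gamma\varepsilon_{t+1}$. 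Using $\|a+b\|^2\le2\|a\|^2+2\|b\|^2$ and $\varepsilon_{t+1}\le b_{\varepsilon,t}$ from Proposition \ref{e_sum_pro}, this already accounts for the term $4n\gamma b_{\varepsilon,t}$ after summing over $i$, and reduces the task to bounding $\sum_{i=0}^{n-1}\|\bar{x}_t^i-z_{t+1}\|^2$.

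Next I would telescope the averaged recursion $\bar{x}_t^{i+1}=\bar{x}_t^i-\gamma(\nabla\mathbf{f}_{\pi_t^i}(\bar{x}_t^i)+e_t^i)$, which is exactly the drift inside \eqref{xave_up}, together with the optimality relation $z_{t+1}=\bar{x}_t^n-\gamma g_{t+1}$, $g_{t+1}\in\partial\phi(z_{t+1})$, to write
\[
\bar{x}_t^i-z_{t+1}=\gamma\Big(\sum_{k=i}^{n-1}\nabla\mathbf{f}_{\pi_t^k}(\bar{x}_t^k)+\sum_{k=i}^{n-1}e_t^k+g_{t+1}\Big).
\]
Squaring and applying $\|u+v+w\|^2\le3(\|u\|^2+\|v\|^2+\|w\|^2)$ splits the estimate into three groups. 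The subgradient group contributes $\|g_{t+1}\|^2\le G_{\phi}^2$ via Assumption \ref{f_assump}(c), giving the term $6\gamma^2 nG_{\phi}^2$; the error group is bounded by $\|e_t^k\|\le b_{e,t}+\gamma C_0$ from Proposition \ref{e_sum_pro} and $\|\sum_{k=i}^{n-1}e_t^k\|^2\le n\sum_{k}\|e_t^k\|^2$, giving $12n^3(\gamma^2 b_{e,t}^2+\gamma^4 C_0^2)$.

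The remaining gradient group is the heart of the argument. I would insert $\pm\nabla\mathbf{f}_{\pi_t^k}(x_*)$ and split once more: the deviation part $\sum_{k=i}^{n-1}(\nabla\mathbf{f}_{\pi_t^k}(\bar{x}_t^k)-\nabla\mathbf{f}_{\pi_t^k}(x_*))$ is converted into Bregman divergences through \eqref{fnablf} and the Jensen-type bound $\|\sum_{k=i}^{n-1}\cdot\|^2\le n\sum_k\|\cdot\|^2$, producing $24\gamma^2 Ln^2\sum_{i=0}^{n-1}\mathbb{E}[D_{\mathbf{f}_{\pi_t^i}}(x_*,\bar{x}_t^i)]$. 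The optimum part $\sum_{k=i}^{n-1}\nabla\mathbf{f}_{\pi_t^k}(x_*)$ is where the reshuffling enters: taking the expectation over the uniform permutation $\pi_t$ and centering by the mean gradient $\frac{1}{n}\nabla f(x_*)=\frac{1}{n}\sum_{k=1}^n\nabla\mathbf{f}_k(x_*)$ makes the cross term vanish, and the sampling-without-replacement variance identity, for a uniformly random index set $S$ of size $n-i$, gives $\mathbb{E}\|\sum_{k\in S}(\nabla\mathbf{f}_k(x_*)-\frac{1}{n}\nabla f(x_*))\|^2=\frac{(n-i)i}{n-1}\sigma_*^2$. Summing the weights $\sum_{i=0}^{n-1}\frac{(n-i)i}{n-1}=\frac{n(n+1)}{6}$ and using $2n(n+1)\le3n^2$ delivers the $3\gamma^2 n^2\sigma_*^2$ term, while the residual mean component is absorbed by the uniform gradient bound $G_f$ of Assumption \ref{f_assump}(d), producing $9\gamma^2 nG_f^2$. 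Collecting all groups gives the claimed inequality. I expect the biased-partial-sum estimate to be the main obstacle: because reshuffling makes $\{\nabla\mathbf{f}_{\pi_t^k}(x_*)\}$ dependent and the tail sums non-martingale, one cannot treat them as an i.i.d. variance, and it is precisely the without-replacement identity together with the careful summation of the weights $\frac{(n-i)i}{n-1}$ over the epoch that produces the correct $n^2$ scaling of the variance term rather than a naive $n^3$.
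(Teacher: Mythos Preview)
Your argument is correct and essentially the same as the paper's. The only cosmetic difference is that you pass through the exact proximal point $z_{t+1}={\rm prox}_{\gamma,\phi}(\bar{v}_t)$ and its true subgradient $g_{t+1}\in\partial\phi(z_{t+1})$, whereas the paper stays with $\bar{x}_{t+1}$ and invokes Lemma~\ref{pk_lemma} to produce a vector $p_{t+1}$ with $\|p_{t+1}\|\le\sqrt{2\gamma\varepsilon_{t+1}}$ together with an $\varepsilon_{t+1}$-subgradient $\bar{d}_{t+1}\in\partial_{\varepsilon_{t+1}}\phi(\bar{x}_{t+1})$; both routes give the identical four-term split with the same $6\gamma^2$/$6\gamma^2$/$6\gamma^2$/$4\gamma$ constants, and the subsequent Bregman bound, error bound, and without-replacement variance computation via Lemma~\ref{xavelem} are identical. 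If anything, your variant is slightly cleaner, since $\|g_{t+1}\|\le G_\phi$ follows directly from Assumption~\ref{f_assump}(c), while bounding $\|\bar{d}_{t+1}\|$ by $G_\phi$ requires extending the subgradient bound to $\varepsilon$-subgradients. One small remark: your use of $\bar{v}_t=\bar{x}_t^n$ relies on the double stochasticity in Assumption~\ref{net_assum}(a), which the lemma statement omits but the paper's proof equally needs through its appeal to \eqref{xave_up}.
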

\begin{proof}
	See Appendix \ref{Vt_proof}.
\end{proof}
\subsection{Proof of Theorem \ref{con_theo}}
\begin{proof}
	(1) First, we prove the consensus property of the generated local variables, i.e. $\lim_{t\to \infty} x_{j,t}\to \bar{x}_t$ for all agent $j$. By \eqref{ite_c} in Lemma \ref{ite_seq}, 
	$\sum_{j=1}^m\|x_{j,t}-\bar{x}_{t} \| \leq  2 m \Gamma \Xi^t \sum_{l=1}^m\|x_{l,t-1}^n\|$ holds. In addition, with  Lemma \ref{poly_lemma} and Lemma \ref{qbound_lemma}, we obtain that $$\sum_{t=1}^{\infty} 2 m \Gamma \Xi^t \sum_{l=1}^m\|x_{l,t-1}^n\|<\infty.$$ Then, $\sum_{t=1}^{\infty} \sum_{j=1}^m\|x_{j,t}-\bar{x}_{t} \|<\infty$. Because $\sum_{j=1}^m\|x_{j,t}-\bar{x}_{t} \|$ is nonnegative, $$\lim_{t\to \infty} \sum_{j=1}^m\|x_{j,t}-\bar{x}_{t} \|=0,$$ and hence, $\lim_{t\to \infty} \|x_{j,t}-\bar{x}_{t} \|=0$ for each agent $j$. Therefore, local variable estimates $x_{j,t}$ of all agents achieve consensus and converge to the average $\bar{x}_t$ as $t\to \infty$. 
	\par (2) Next, we consider the convergence rate of the proposed algorithm. By Lemma \ref{pk_lemma} and \eqref{xave_up}, we have
	\begin{align}
	\bar{x}_t=\gamma \bar{g}_t +\bar{x}_{t+1}+\gamma e_{t+1}+p_{t+1}+\gamma \bar{d}_{t+1},
	\end{align}
	where $e_{t+1}=\sum_{i=0}^{n-1} e_{t}^i$, $\bar{d}_{t+1}\in \partial_{\varepsilon_{t+1}} \phi (\bar{x}_{t+1})$ and $\bar{g}_t\triangleq \sum_{i=0}^{n-1} \nabla \mathbf{f}_{\pi_t^i}(\bar{x}_t^i)$.
	Then, the square of the norm between $\bar{x}_t$ and the optimal solution $x_*$ satisfies
	\begin{align}\label{inxsubstar}
	&\|\bar{x}_t-x_*\|^2\notag\\
	=&\|\gamma \bar{g}_t +\gamma {e_{t+1}}+p_{t+1}+\gamma \bar{d}_{t+1}+\bar{x}_{t+1}-x_*\|^2\notag\\
	\geq & \|\bar{x}_{t+1}-x_*\|^2+2\gamma \sum_{i=0}^{n-1}\langle \bar{x}_{t+1}-x_*, \nabla \mathbf{f}_{\pi_t^i}(\bar{x}_t^i)\rangle\notag\\
	& +2\langle  \bar{x}_{t+1}-x_*, \gamma e_{t+1}+p_{t+1}+\gamma \bar{d}_{t+1} \rangle.
	\end{align}
	For the second term in \eqref{inxsubstar}, we have for any $i$,
	\begin{align}\label{inabx}
	&\langle \bar{x}_{t+1}-x_*, \nabla \mathbf{f}_{\pi_t^i}(\bar{x}_t^i)\rangle \notag\\
	=& \mathbf{f}_{\pi_t^i}(\bar{x}_{t+1})-\mathbf{f}_{\pi_t^i}(x_*)\notag\\
	&+\mathbf{f}_{\pi_t^i}(x_*)-\mathbf{f}_{\pi_t^i}(\bar{x}_t^i)-\langle \nabla \mathbf{f}_{\pi_t^i}(\bar{x}_t^i), x_*-\bar{x}_t^i\rangle \notag\\
	&-[\mathbf{f}_{\pi_t^i}(\bar{x}_{t+1})-\mathbf{f}_{\pi_t^i}(\bar{x}_t^i)-\langle \nabla \mathbf{f}_{\pi_t^i}(\bar{x}_t^i), \bar{x}_{t+1}-\bar{x}_t^i\rangle]\notag \\
	=& [\mathbf{f}_{\pi_t^i}(\bar{x}_{t+1})-\mathbf{f}_{\pi_t^i}(x_*)] \!+\!D_{\mathbf{f}_{\pi_t^i}}(x_*,\bar{x}_t^i)\!-\!D_{\mathbf{f}_{\pi_t^i}}(\bar{x}_{t+1},\bar{x}_t^i).
	\end{align}
	Summing $\mathbf{f}_{\pi_t^i}(\bar{x}_{t+1})-\mathbf{f}_{\pi_t^i}(x_*)$ over $i$ from $0$ to $n-1$ gives
	\begin{align}\label{ff}
	\sum_{i=0}^{n-1}[\mathbf{f}_{\pi_t^i}(\bar{x}_{t+1})-\mathbf{f}_{\pi_t^i}(x_*)]= f(\bar{x}_{t+1})-f_*.
	\end{align}
	Next, we bound the third term in \eqref{inabx} with $L$-smoothness in Assumption \ref{f_assump} (b) as 
	\begin{align}\label{df}
	D_{\mathbf{f}_{\pi_t^i}}(\bar{x}_{t+1},\bar{x}_t^i)\leq \frac{L}{2} \|\bar{x}_{t+1}-\bar{x}_t^i\|^2.
	\end{align}
	By summing \eqref{df} over $i$ from $0$ to $n-1$, we obtain an upper bound of the forward deviation  $\mathcal{V}_t$ by Lemma \ref{in_V_lem} that
	\begin{align*}
	&\sum_{i=0}^{n-1} \mathbb{E}[D_{\mathbf{f}_{\pi_t^i}}(\bar{x}_{t+1},\bar{x}_t^i)]
	\leq \frac{L}{2}\mathbb{E}[\mathcal{V}_t]\\
	\leq &12\gamma^2L^2n^2\sum_{i=0}^{n-1} \mathbb{E}[D_{\mathbf{f}_{\pi_t^i}}(x_*,\bar{x}_t^i)]+3\gamma^2L \frac{n^2\sigma_*^2}{2}\\
	&+\!\frac{9}{2}\gamma^2 L n G_f^2\!+\!3\gamma^2 n L G_{\phi}^2\!+\!6 L n^3 (\gamma^2 b_{e,t}^2\!+\!\gamma^4 C_0^2)\!+\!2L n\gamma b_{\varepsilon,t}. 
	\end{align*}
	With the above inequality, we estimate the lower bound of the sum of the second and third term in \eqref{inabx} as
	\begin{align}\label{dfsub}
	&\sum_{i=0}^{n-1} \mathbb{E}[D_{\mathbf{f}_{\pi_t^i}}(x_*,\bar{x}_t^i)-D_{\mathbf{f}_{\pi_t^i}}(\bar{x}_{t+1},\bar{x}_t^i)]\notag\\
	\geq& (1-  12\gamma^2L^2n^2) \sum_{i=0}^{n-1} \mathbb{E}[D_{\mathbf{f}_{\pi_t^i}}(x_*,\bar{x}_t^i)]-3\gamma^2L \frac{n^2\sigma_*^2}{2}\notag\\
	&\!-\!\frac{9}{2}\gamma^2 L n G_f^2\!-\!3\gamma^2 n L G_{\phi}^2\!-\!6 L n^3 (\gamma^2 b_{e,t}^2\!+\!\gamma^4 C_0^2)\!-\!2L n\gamma b_{\varepsilon,t}\notag\\
	\geq &-3\gamma^2L \frac{n^2\sigma_*^2}{2}-\frac{9}{2}\gamma^2 L n G_f^2-3\gamma^2n L G_{\phi}^2\notag\\
	&-6 L n^3 (\gamma^2 b_{e,t}^2+\gamma^4 C_0^2)-2L n\gamma b_{\varepsilon,t},
	\end{align}
	where the last inequality holds since $(1-  12\gamma^2L^2n^2)\geq 0$ by $\gamma\leq \sqrt{3}/6Ln$, and $\sum_{i=0}^{n-1} D_{\mathbf{f}_{\pi_t^i}}(x_*,\bar{x}_t^i)$ is non-negative due to the convexity.
	\par By rearranging \eqref{inxsubstar}, we have
	 \begin{align}\label{tra1}
	 &\|\bar{x}_{t+1}-x_*\|^2\notag\\
	 \overset{\eqref{inabx}, \eqref{ff}}{\leq} &\|\bar{x}_{t}-x_*\|^2-2\gamma (f(\bar{x}_{t+1})-f_*)\notag\\
	 &-\sum_{i=0}^{n-1} 2\gamma (D_{\mathbf{f}_{\pi_t^i}}(x_*,\bar{x}_t^i)-D_{\mathbf{f}_{\pi_t^i}}(\bar{x}_{t+1},\bar{x}_t^i))\notag\\
	 &-2 \langle \bar{x}_{t+1}-x_*,\gamma\bar{d}_{t+1}\rangle \!- 2\langle \bar{x}_{t+1}-x_*, \gamma e_{t+1}\!+\!p_{t+1}\rangle\notag\\
	 \leq & \|\bar{x}_{t}-x_*\|^2-2\gamma (f(\bar{x}_{t+1})-f_*)\notag\\
	 &-\sum_{i=0}^{n-1} 2\gamma (D_{\mathbf{f}_{\pi_t^i}}(x_*,\bar{x}_t^i)-D_{\mathbf{f}_{\pi_t^i}}(\bar{x}_{t+1},\bar{x}_t^i))\notag\\
	 &-2 \langle \bar{x}_{t+1}-x_*,\gamma\bar{d}_{t+1}\rangle\notag\\
	 &+ 2\| \bar{x}_{t+1}-x_*\| (\gamma \|e_{t+1}\|+\|p_{t+1}\|)\notag\\
	 \overset{\eqref{bet},\eqref{sqrtbvapt}}{\leq} & \|\bar{x}_{t}-x_*\|^2\!-\!2\gamma (f(\bar{x}_{t+1})\!-\!f_*)\!-\!2 \langle \bar{x}_{t+1}-x_*,\gamma\bar{d}_{t+1}\rangle\notag\\
	 &-\sum_{i=0}^{n-1} 2\gamma (D_{\mathbf{f}_{\pi_t^i}}(x_*,\bar{x}_t^i)-D_{\mathbf{f}_{\pi_t^i}}(\bar{x}_{t+1},\bar{x}_t^i))\notag\\
	 &+ 2\| \bar{x}_{t+1}-x_*\| (n \gamma b_{e,t}+n\gamma^2C_0 +\sqrt{2\gamma}b_{\sqrt{\varepsilon},t}).	 
	 \end{align}
 	where the second inequality holds due to the Cauchy Schwarz inequality and the triangle inequality.
	\par Taking expectation of \eqref{tra1} and by \eqref{dfsub},
	\begin{align*}
	&\mathbb{E}[\|\bar{x}_{t+1}-x_*\|^2]\\
	\leq &\mathbb{E}[\|\bar{x}_{t}-x_*\|^2]-2\gamma \mathbb{E}[f(\bar{x}_{t+1})-f_*]\\
	&+9\gamma^3 L n G_f^2+3\gamma^3L n^2\sigma_*^2+6\gamma^3 n L G_{\phi}^2\\
	&+12 n^3 (\gamma^3 L b_{e,t}^2+\gamma^5 C_0^2)+4\gamma^2 L n b_{\varepsilon,t}\\
	&-2\gamma \mathbb{E}[\langle  \bar{x}_{t+1}-x_*, \bar{d}_{t+1} \rangle]+2\lambda_t\mathbb{E}[\|  \bar{x}_{t+1}-x_*\|]\\
	\leq &\mathbb{E}[\|\bar{x}_{t}-x_*\|^2]-2\gamma \mathbb{E}[f(\bar{x}_{t+1})-f_*]+9\gamma^3 L n G_f^2\\
	&+3\gamma^3L n^2\sigma_*^2+6\gamma^3 n L G_{\phi}^2\\
	&+12 n^3 (\gamma^3 L b_{e,t}^2+\gamma^5 C_0^2)+4\gamma^2 L n b_{\varepsilon,t}\\ 
	&-2\gamma \mathbb{E}[\phi(\bar{x}_{t+1})-\phi_*]+2\gamma b_{\varepsilon,t}+ 2\lambda_t\mathbb{E}[\|  \bar{x}_{t+1}-x_*\|],
	\end{align*}
	where $\lambda_t=\gamma n b_{e,t}+n\gamma^2C_0+\sqrt{2\gamma}b_{\sqrt{\varepsilon},t}$, the second inequality holds because of $\langle \bar{x}_{t+1}-x_*, \bar{d}_{t+1}\rangle+\varepsilon_{t+1} \geq \phi(\bar{x}_{t+1})-\phi_*$ and \eqref{bvapt}.
	\par Then, rearranging the above result leads to
	\begin{align*}
	&2\gamma \mathbb{E}[F(\bar{x}_{t+1})-F_*]\\
	\leq &\mathbb{E}[\|\bar{x}_{t}-x_*\|^2]-\mathbb{E}[\|\bar{x}_{t+1}-x_*\|^2]+9\gamma^3 L n G_f^2\\
	&+3\gamma^3L n^2\sigma_*^2+6\gamma^3 n L G_{\phi}^2+12 n^3 (\gamma^3 L b_{e,t}^2+\gamma^5 C_0^2)\\
	&+4\gamma^2 L n b_{\varepsilon,t} +2\gamma b_{\varepsilon,t}+ 2\lambda_t\mathbb{E}[\|\bar{x}_{t+1}-x_*\|].
	\end{align*}
	Summing over $t$ from $0$ to $T-1$,
	\begin{align}\label{fsubfstar}
	&2  \gamma  \sum_{t=0}^{T-1} \mathbb{E}[F(\bar{x}_{t+1})-F_*]\notag\\
	\leq &\|\bar{x}_{0}-x_*\|^2\!-\mathbb{E}[\|\bar{x}_{T}-x_*\|^2]\!+9T\gamma^3 L n G_f^2\!+3T\gamma^3L n^2\sigma_*^2\notag\\
	&+6T\gamma^3 n L G_{\phi}^2+12T n^3 \gamma^5  C_0^2 +12\gamma^3 L n^3 \sum_{t=0}^{T-1} b_{e,t}^2\notag\\
	&+2\gamma  (1\!+\!2\gamma nL)\sum_{t=0}^{T-1}b_{\varepsilon,t}\!+\! 2\sum_{t=0}^{T-1}\lambda_t\mathbb{E}[\|  \bar{x}_{t+1}\!-\!x_*\|].
	\end{align}
	\par Then, we need to estimate an upper bound of $\mathbb{E}[\| \bar{x}_{t+1}-x_*\|]$. By the transformation of \eqref{fsubfstar} and the fact that $\mathbb{E}[\|\bar{x}_{T}-x_*\|]^2\leq \mathbb{E}[\|\bar{x}_{T}-x_*\|^2]$,
	\begin{align*}
	&\mathbb{E}[\|\bar{x}_{T}-x_*\|]^2\\
	\leq &S_T + 2\sum_{t=0}^{T-1}\lambda_t\mathbb{E}[\|  \bar{x}_{t+1}-x_*\|].
	\end{align*}
	 where $S_T= \|\bar{x}_{0}-x_*\|^2+9T \gamma^3 L n G_f^2+3T\gamma^3L {n^2}\sigma_*^2+6T\gamma^3 n L G_{\phi}^2+12n^3 \gamma^5 T C_0^2 +12\gamma^3 L n^3 \sum_{t=0}^{T-1} b_{e,t}^2 +2\gamma  (1+2\gamma nL)\sum_{t=0}^{T-1}b_{\varepsilon,t}$. It follows from Lemma \ref{usseq} that  
	\begin{align*}
	\mathbb{E}[\|\bar{x}_{T}\!-\!x_*\|]&\leq \sum_{t=0}^{T-1}(\gamma n b_{e,t}+n\gamma^2C_0+\sqrt{2\gamma}b_{\sqrt{\varepsilon},t})\\
	&+\!\Big(S_T\!+\!\big(\sum_{t=0}^{T-1}\! (\gamma n b_{e,t}\!+\!n\gamma^2C_0\!+\!\sqrt{2\gamma}b_{\sqrt{\varepsilon},t}) \big)^2\Big)^{\frac{1}{2}},
	\end{align*}
	where the sequences $b_{e,t}$, $b_{\varepsilon,t}$ and $b_{\sqrt{\varepsilon},t}$ are polynomial-geometric sequences and are summable by Lemma \ref{poly_lemma}.
	\par By the summability of $b_{e,t}$, $b_{\varepsilon,t}$ and $b_{\sqrt{\varepsilon},t}$, there exist some scalars $\textbf{A}$ and $\textbf{B}$ such that 
	\begin{align}\label{Atb}
	A_T=\sum_{t=0}^{T-1}(\gamma n b_{e,t}+n\gamma^2C_0+\sqrt{2\gamma}b_{\sqrt{\varepsilon},t})\leq \textbf{A} <\infty
	\end{align}
	{where we use the step-size range $\gamma\leq 1/\sqrt{T}$.} And 
	\begin{align}\label{Btb}
	B_T= &T \textbf{D}+12\gamma^3 L n^3 \sum_{t=0}^{T-1} b_{e,t}^2 +2\gamma  (1+2\gamma nL)\sum_{t=0}^{T-1}b_{\varepsilon,t},\notag\\
	\leq & T \textbf{D}+\textbf{B},
	\end{align}
	where $\textbf{D}\triangleq \gamma^3(9 L n G_f^2+3L {n^2}\sigma_*^2+6 n L G_{\phi}^2+12 n^3 \gamma^2  C_0^2)$.
	Then, $\mathbb{E}[\|\bar{x}_T-x_*\|]$ satisfies
	\begin{align*}
	\mathbb{E}[\|\bar{x}_T-x_*\|] \leq \textbf{A}+(\|\bar{x}_0-x_*\|^2+(T\textbf{D}+\textbf{B})+\textbf{A}^2)^{\frac{1}{2}}.
	\end{align*}
	Since $\{A_t\}$ and $\{B_t\}$ are increasing sequences, we have for $t\leq T$,
	\begin{align}\label{xtsub}
	\mathbb{E}[\|\bar{x}_t-x_*\|]\leq& A_T+(\|\bar{x}_0-x_*\|^2+B_T+A_T^2)^{\frac{1}{2}}\notag\\
	\leq & 2A_T+\|\bar{x}_0-x_*\|+\sqrt{B_T}\notag\\
	\leq &2\textbf{A}+\|\bar{x}_0-x_*\|+\sqrt{T\textbf{D}+\textbf{B}}.
	\end{align}
	Now, we can bound the right-hand side of \eqref{fsubfstar} with \eqref{xtsub},
	
	\begin{align*}
	&2 \gamma \sum_{t=1}^{T} \mathbb{E}[F(\bar{x}_{t})-F_*]\notag\\
	\leq &\|\bar{x}_{0}-x_*\|^2+ B_T+ 2A_T( 2A_T+\|\bar{x}_0-x_*\|+\sqrt{B_T})\\
	\leq &\|\bar{x}_{0}\!-\!x_*\|^2\!+\! T\textbf{D}\!+\!\textbf{B}\!+\!2\textbf{A}(2\textbf{A}\!+\!\|\bar{x}_0-x_*\|\!+\!\sqrt{T\textbf{D}\!+\!\textbf{B}}).
	\end{align*}
	By dividing both sides by $2\gamma T$, we get
	\begin{align*}
		&\frac{1}{T}\sum_{t=1}^{T} \mathbb{E}[F(\bar{x}_{t})-F_*]\notag\\
		\leq &\frac{\|\bar{x}_{0}\!-\!x_*\|^2}{2\gamma T}\!+\! \frac{\textbf{D}}{2\gamma}\!+\!\frac{\textbf{B}}{2\gamma T}\!+\! \frac{\textbf{A}}{\gamma T}\!( 2\textbf{A}\!+\!\|\bar{x}_0\!-\!x_*\|\!+\!\sqrt{\!T\textbf{D}\!+\!\textbf{B}}).
	\end{align*}
Finally, by the convexity of $F$, the average iterate $\hat{x}_T=\frac{1}{T}\sum_{t=1}^T \bar{x}_t$ satisfies
\begin{align}\label{fsubexpec}
&\mathbb{E}[F(\hat{x}_T)-F_*]\notag\\
\leq& \frac{1}{T}\sum_{t=1}^{T} \mathbb{E}[F(\bar{x}_{t})-F_*]\notag\\
\leq &\frac{\|\bar{x}_{0}\!-\!x_*\|^2}{2\gamma T}\!+\! \frac{\textbf{D}}{2\gamma}\!+\!\frac{\textbf{B}}{2\gamma T}\!+\! \frac{\textbf{A}}{\gamma T}( 2\textbf{A}\!+\!\|\bar{x}_0\!-\!x_*\|\!+\!\sqrt{\!T\textbf{D}\!+\!\textbf{B}})\notag\\
\leq &\frac{\|\bar{x}_{0}-x_*\|^2}{2\gamma T}+\frac{\textbf{A}}{\gamma T}( 2\textbf{A}+\|\bar{x}_0-x_*\|)
+\frac{\textbf{A}\sqrt{\textbf{B}}}{\gamma T}+ \frac{\textbf{B}}{2\gamma T}\notag\\
&+\frac{\textbf{A}\sqrt{\textbf{D}}}{\gamma \sqrt{T}}+\frac{\textbf{D}}{2\gamma},\notag\\
=&\mathcal{O}(\frac{1}{T}+\frac{1}{T^{1/2}})
\end{align}
{where $\gamma\leq T^{-1/2}$ and the last equality holds due to the fact that $\sqrt{\mathbf{D}}/{\gamma}=\mathcal{O}(T^{-1/4})$, and $\mathbf{D}/{\gamma}=\mathcal{O}(T^{-1})$.}
\end{proof}
\section{Simulation}\label{simulation}
In this section, we apply the proposed algorithm DPG-RR to optimize the black-box binary classification problem \cite{log_simu}, which is to find the optimal predictor $x\in \mathbb{R}^n$ by solving 
\begin{align}\label{black_box}
	{\rm min}_{x\in \mathbb{R}^d} F(x),\ F(x)=\frac{1}{m} \sum_{j=1}^m f_j(x) +\lambda_1 \|x\|_1,
\end{align}
where $f_j(x)\triangleq \sum_{i=1}^n \ln (1+\exp(-l_{j,i}\langle a_{j,i},x\rangle))$, $a_{j,i}\in \mathbb{R}^d$ is the feature vector of the $i$th local sample of agent $j$, $l_{j,i} \in \{-1,1\}$ is the classification value of the $i$th local sample of agent $j$ and $\{a_{j,i},l_{j,i}\}_{i=1}^n$ denotes the set of local training samples of agent $j$. 
 In this experiment, we use the publicly available real datasets a9a and w8a \footnote{a9a and w8a are available in the website www.csie.ntu.edu.tw/~cjlin/libsvmtools/datasets/.}. In addition, in our setting, there are $m=10$ agents and $\lambda_1=5\times 10^{-4}$. All comparative algorithms are initialized with a same value. Experiment codes and the adjacent matrices of ten-agent time-varying networks are provided at \url{https://github.com/managerjiang/Dis-Prox-RR}.
 \begin{figure}
 	\centering
 	\subfigure[Consensus trajectories]{
 		\includegraphics[width=6cm]{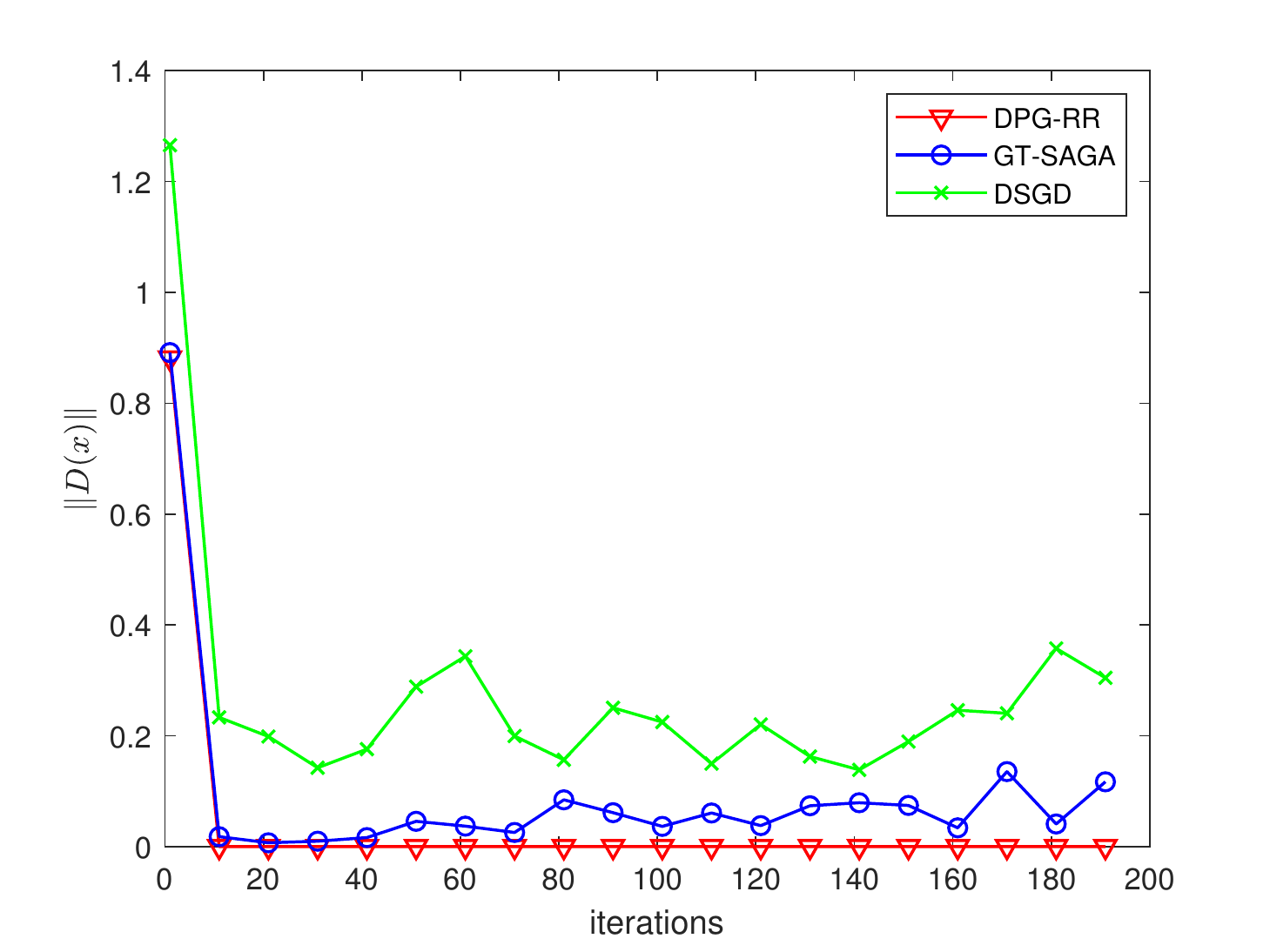}
 		\label{a9a_con_sto}	
 	}
 
 	\subfigure[ Cost evolutions]{
 		\includegraphics[width=7.5cm,height=5.2cm]{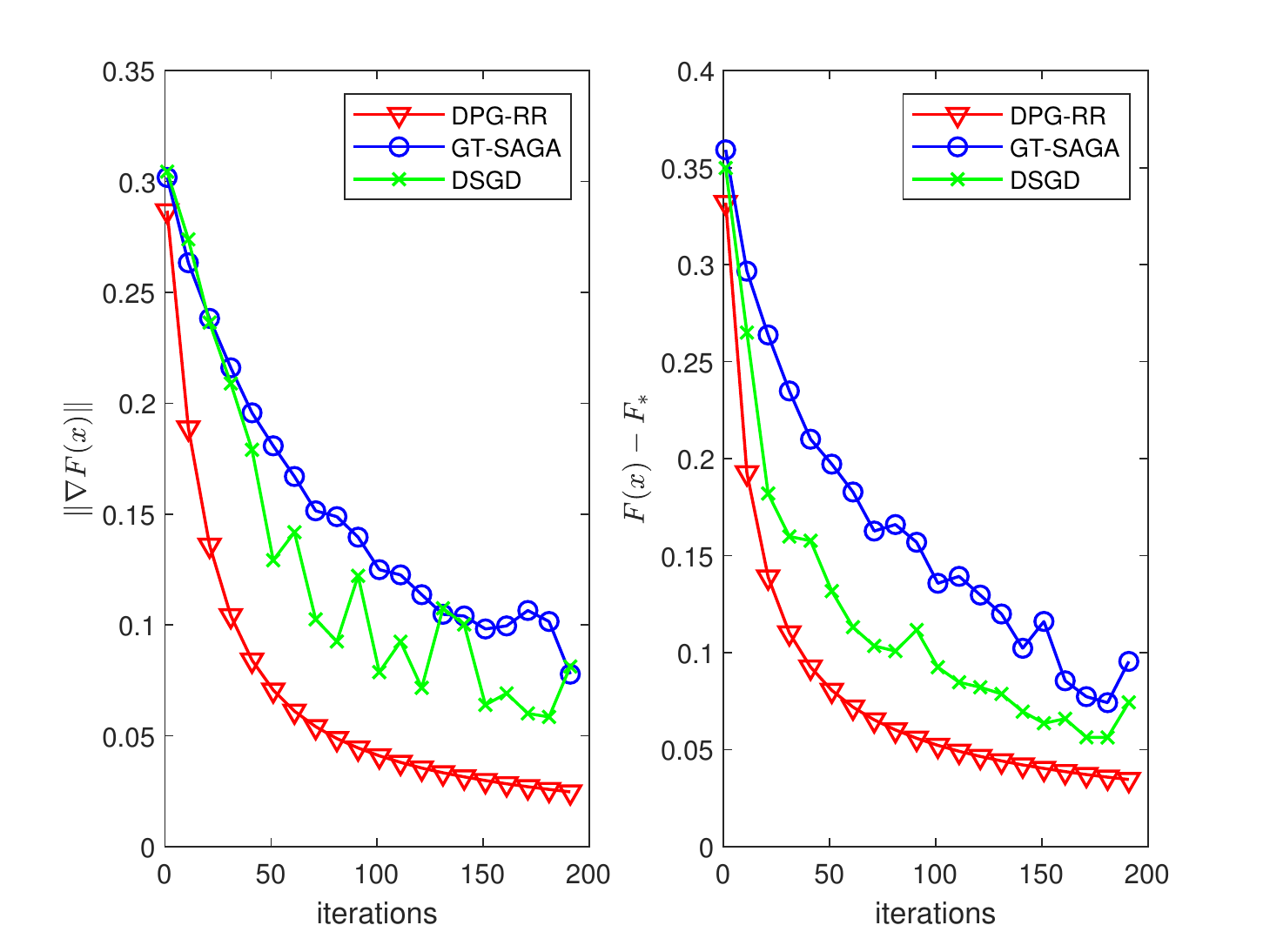}
 		\label{a9a_f_sto}	
 	}
 	\caption{ Different algorithms for a9a dataset over time-invariant graphs}
 	\label{a9a_fig_sto}
 \end{figure}
 \begin{figure}
 	\centering
 	\subfigure[Consensus trajectories]{
 		\includegraphics[width=6cm]{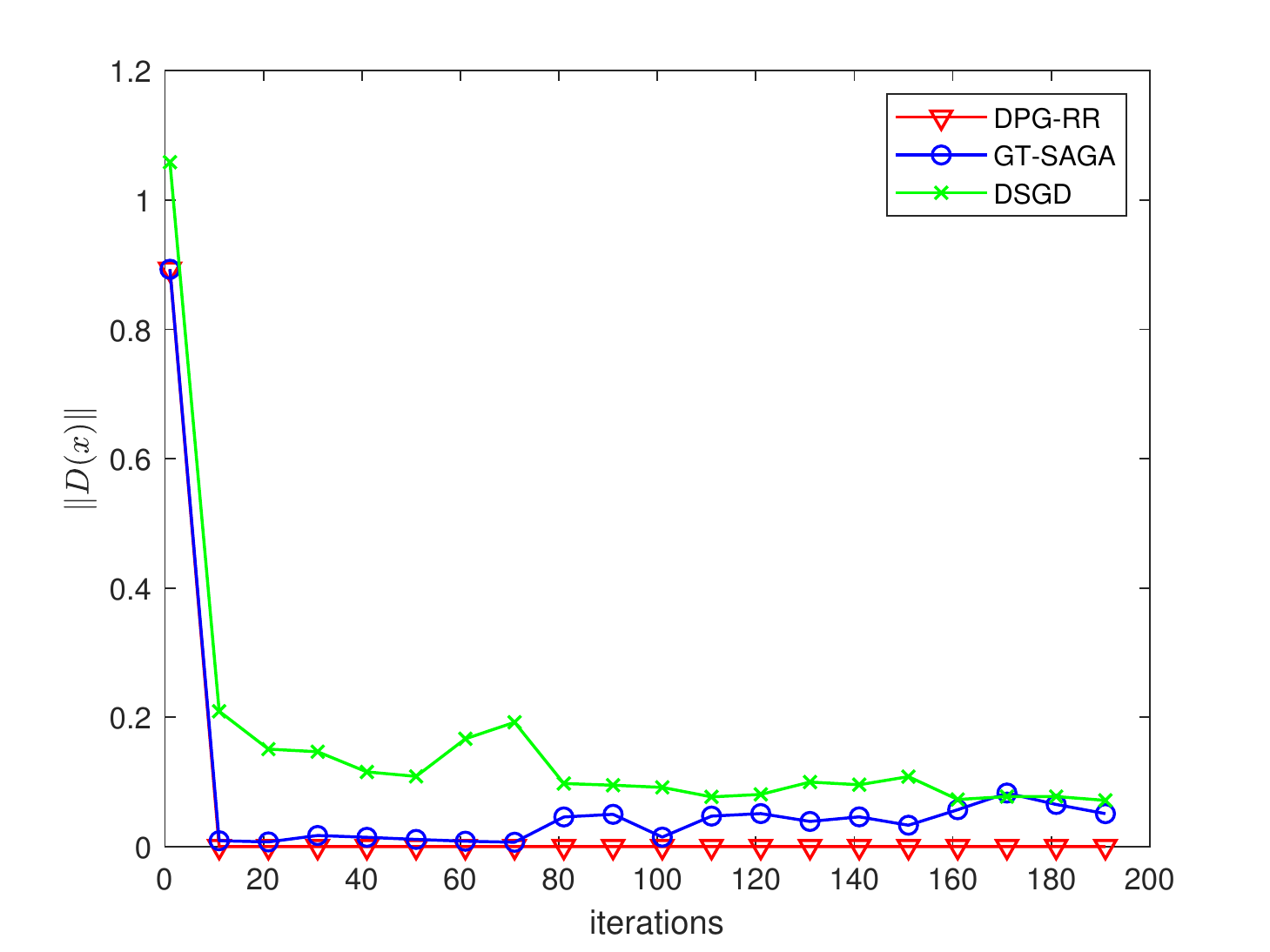}
 		\label{w8a_con_sto}	
 	}
 	\subfigure[ Cost evolutions]{
 		\includegraphics[width=7.5cm,height=5.2cm]{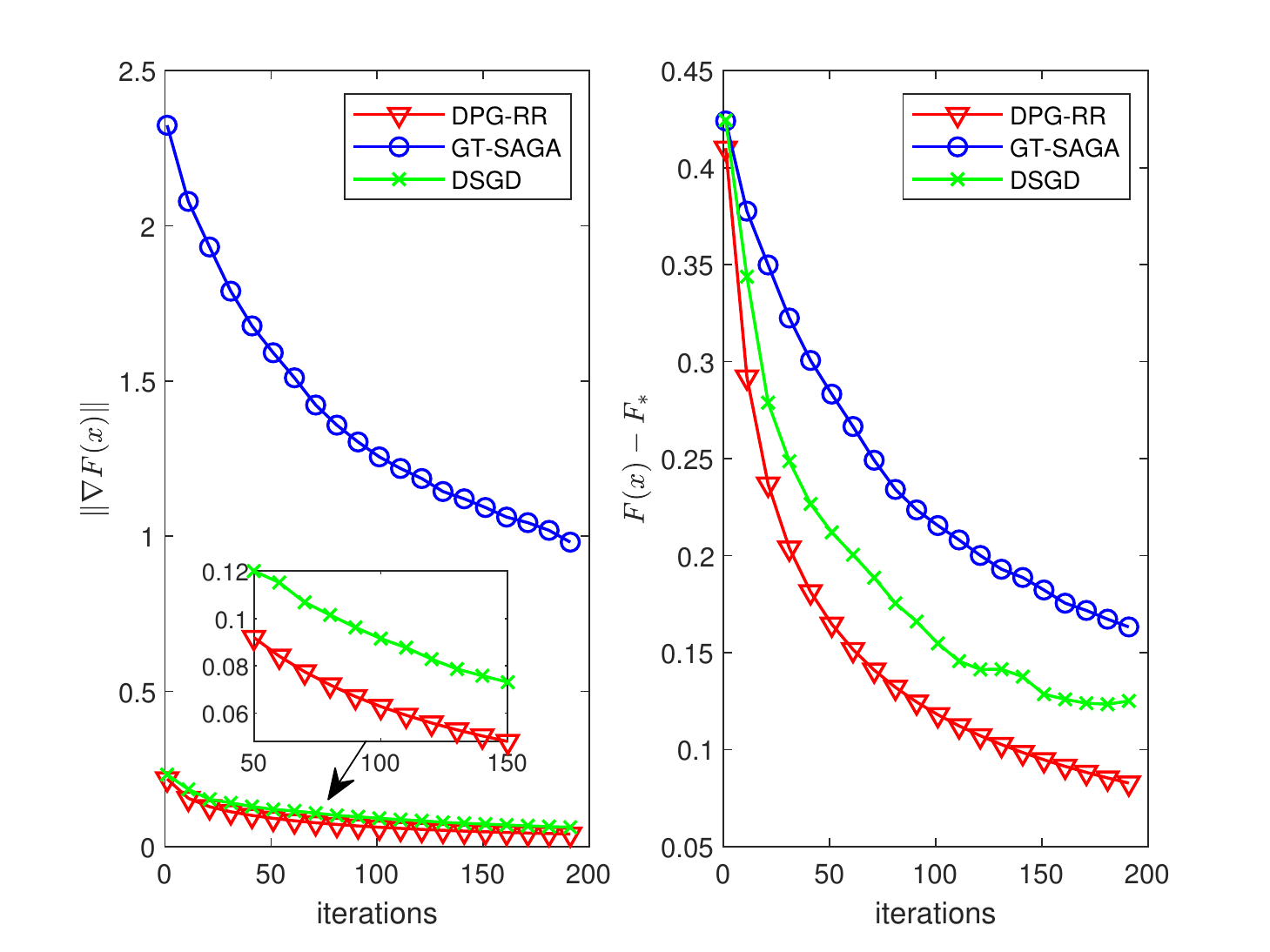}
 		\label{w8a_f_sto}	
 	}
 	\caption{ Different algorithms for w8a dataset over time-invariant graphs}
 	\label{w8a_fig_sto}
 \end{figure}
 {
  \par (1) Time-invariant networks: For comparison, we apply some existing distributed stochastic algorithms including GT-SAGA in \cite{2020TSP} and DSGD in \cite{DPSG}, and the proposed stochastic DPG-RR to solve \eqref{black_box} over the ten-agent  connected network. Since the cost function in \eqref{black_box} is non-smooth, we replace the gradients in GT-SAGA and DSGD with subgradients. The simulation results for a9a and w8a datasets are shown in Figs. \ref{a9a_fig_sto} and \ref{w8a_fig_sto}.  Fig. \ref{a9a_f_sto} and Fig. \ref{w8a_f_sto} show that the trajectories of objective function $F(x)$ generated by all algorithms converge quickly to the optimum $F_*$. To measure the consensus performance, we define one quantity $D(x)$\footnote{The $D(x)$ is an expansion of $\hat{x}^\top L \hat{x}$, where $L\triangleq \mathcal{L}\otimes I_d\in \mathbb{R}^{md\times md}$, $\mathcal{L}$ is the Laplacian matrix of the undirected multi-agent network and $\hat{x}=[x_1,\cdots,x_m]^\top\in \mathbb{R}^{md}$. In addition, the Laplacian matrix of an undirected multi-agent network is positive semi-definite. By the fact that $0$ is the single eigenvalue corresponding to eigenvector $1_{md}$ of $L$, $L\hat{x} = 0_{md}$ if and only if $x_i = x_j$ for all $i,j \in \{1,\cdots,m\}$.} as $D(x)=\sum_{i=1}^{m}x_i'\sum_{j=1}^m a_{ij}(x_i-x_j)$, where $a_{ij}$ is $(i,j)$th element of one doubly-stochastic adjacent matrix. If all local variable estimates achieve consensus, then, $D(x)=0$. Fig. \ref{a9a_con_sto} and Fig. \ref{w8a_con_sto} represent that local variables generated by all algorithms achieve consensus. In addition, it is seen that the proposed algorithm DPG-RR owns a better consensus performance than GT-SAGA and DSGD, which verify the effectiveness of the multi-step consensus mapping in DPG-RR.
  
\par Although all comparative algorithms have comparable convergence rate in practice, \cite{2020TSP} and \cite{DPSG} do not provide the theoretical convergence analysis of GT-SAGA and DSGD for non-smooth optimization. In addition, these algorithms are not applicable for time-varying multi-agent networks, hence, we further compare our proposed algorithm with some recent distributed deterministic algorithms for non-smooth optimization over time-varying networks.
}
\par (2) Time-varying networks: We compare DPG-RR with the distributed deterministic proximal algorithm Prox-G in \cite{proximal-convex}, the algorithm NEXT in \cite{NEXT2016} and the distributed subgradient method (DGM) in \cite{DGM-nedic}  to solve \eqref{black_box} over the time-varying graphs. DPG-RR and Prox-G adopt a same constant step-size, NEXT and DGM have diminishing step-sizes.
\begin{figure}
	\centering
	\includegraphics[width=8cm]{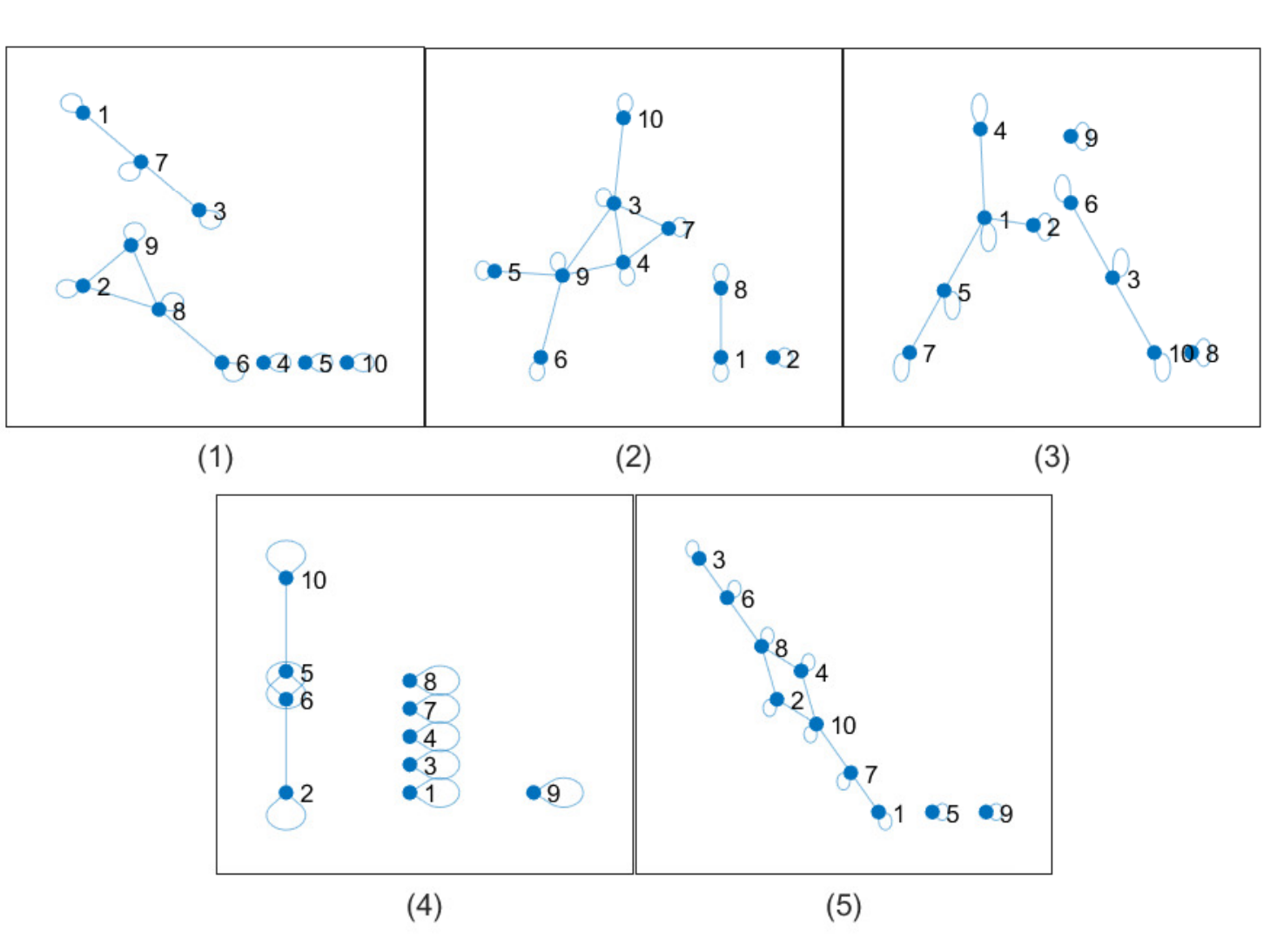}
	\caption{Ten-agent periodically time-varying networks}
	\label{topo_fig}	
\end{figure}
All distributed algorithms are applied over {periodically time-varying} ten-agent networks satisfying Assumption \ref{net_assum} to solve the problem \eqref{black_box}, which are shown in Fig. \ref{topo_fig}. 

\begin{figure}
	\centering
	\subfigure[Consensus trajectories]{
		\includegraphics[width=6cm]{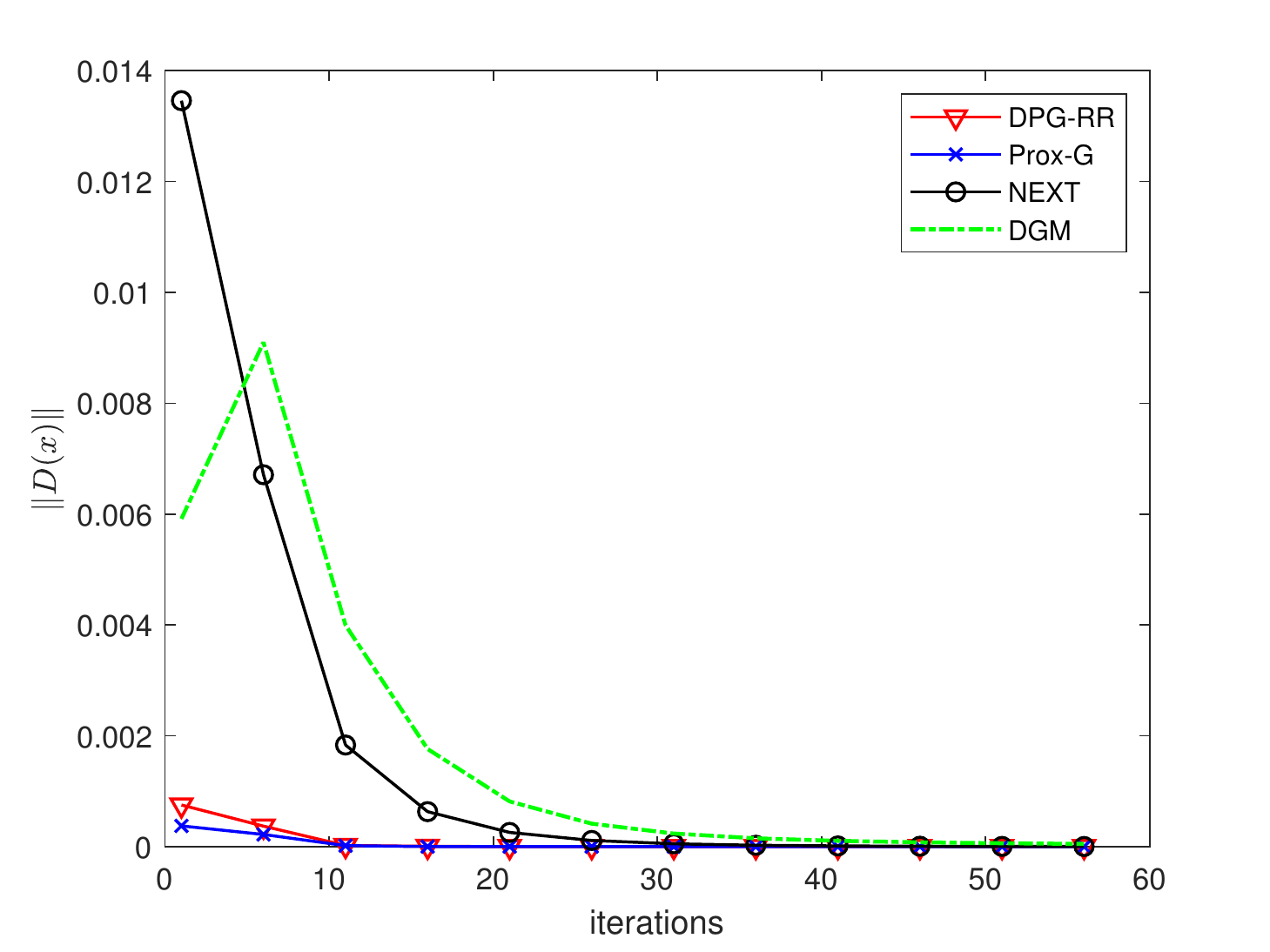}
		\label{a9a_con}	
	}
	\subfigure[ Cost evolutions]{
		\includegraphics[width=7.5cm]{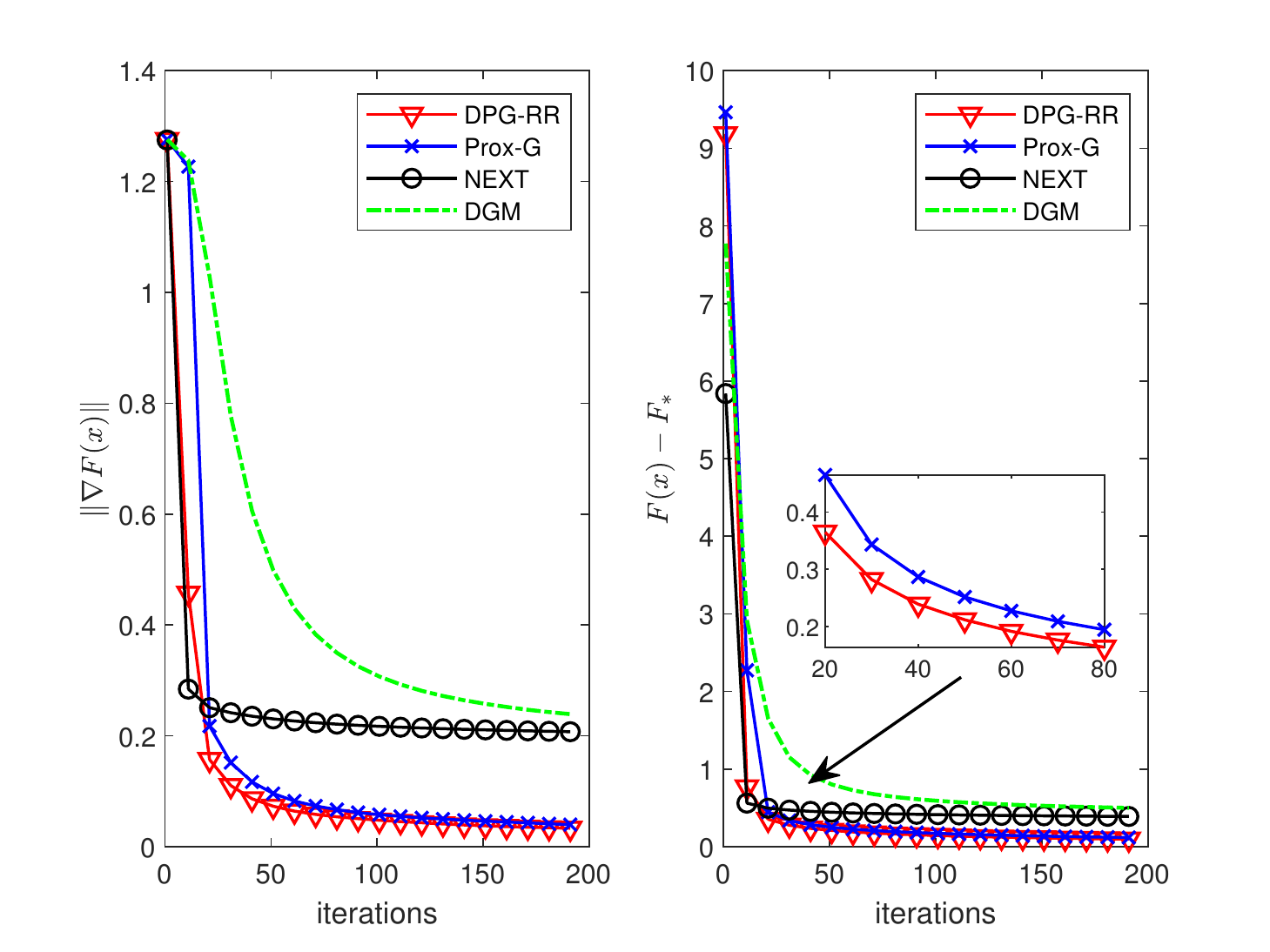}
		\label{a9a_f}	
	}
	\caption{ Different algorithms for a9a dataset over time-varying graphs}
	\label{a9a_fig}
\end{figure}

\begin{figure}
	\centering
	\subfigure[Consensus trajectories]{
	\includegraphics[width=6cm]{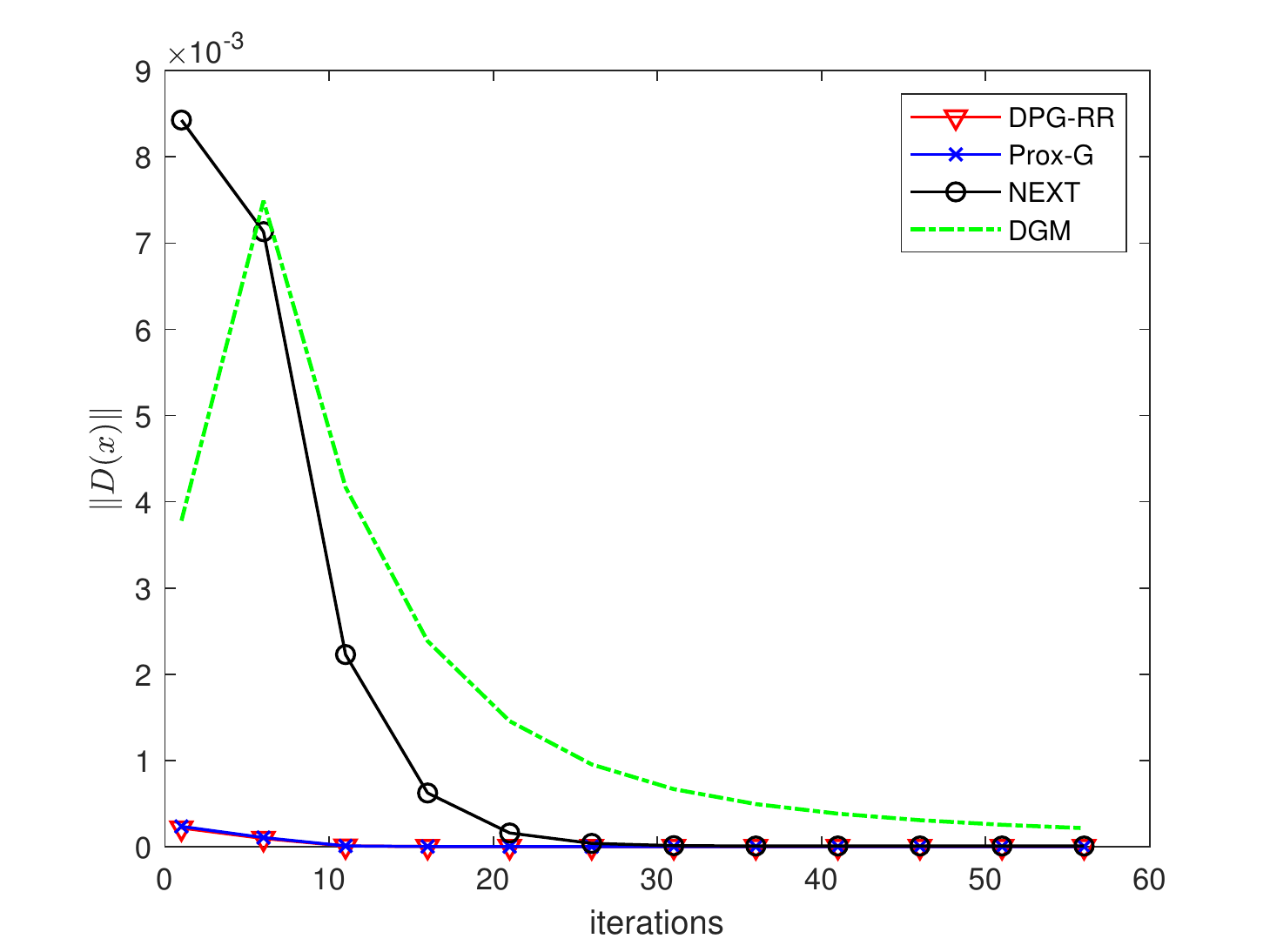}
	\label{w8a_con}	
	}
	\subfigure[ Cost evolutions]{
	\includegraphics[width=7.5cm]{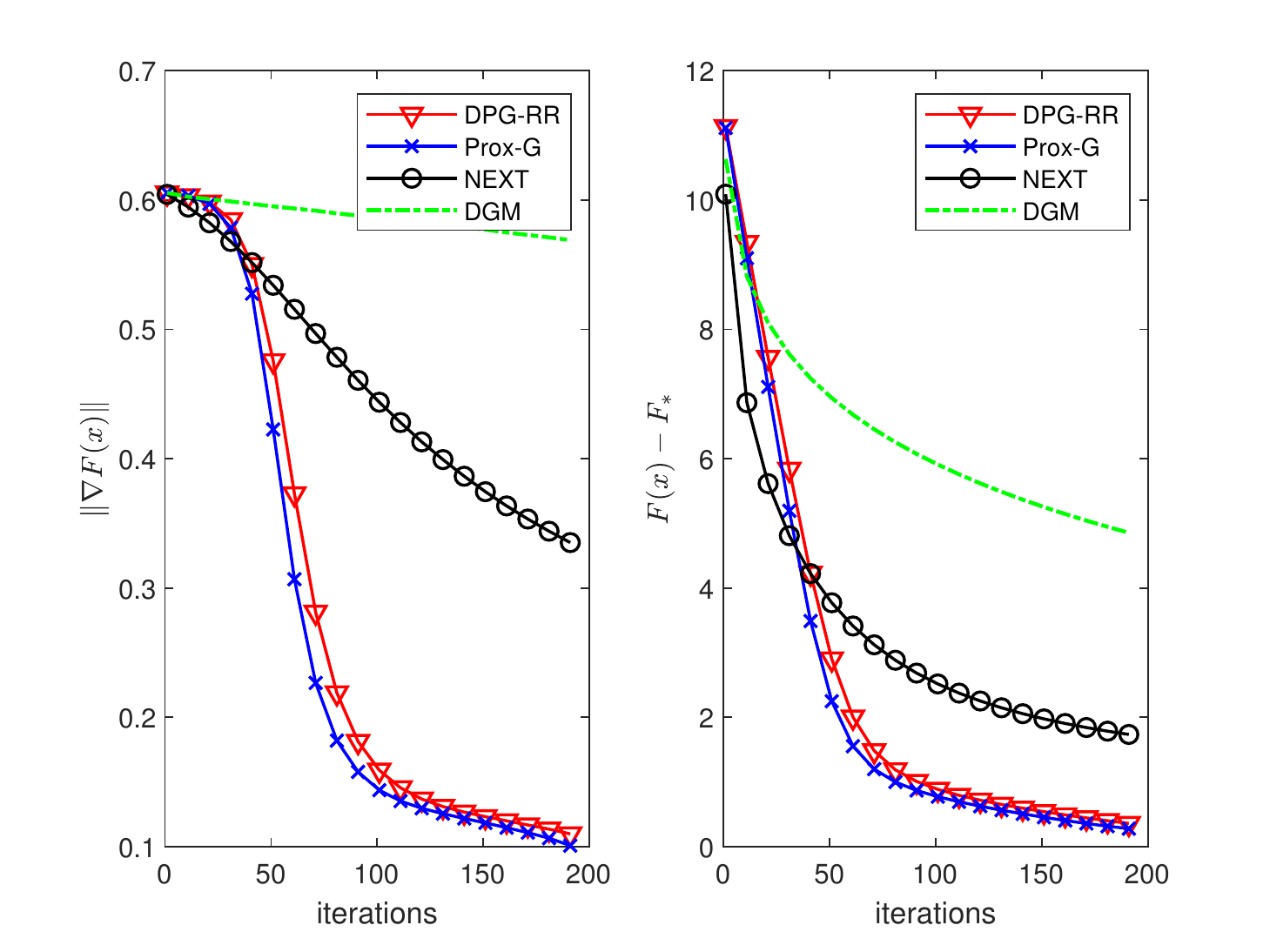}
	\label{w8a_f}	
	}
	\caption{ Different algorithms for w8a dataset over time-varying graphs}
	\label{w8a_fig}
\end{figure}
\begin{table}
	\caption{Running time of different algorithms}\label{run_time_table}
	\centering
	\begin{tabular}{|c|cc|}
		\hline
		& \multicolumn{2}{c|}{ time(s)} \\ \cline{2-3} 
		\multirow{-2}{*}{Algorithms} & \multicolumn{1}{c|}{a9a}                       & w8a                        \\ \hline
		DPG-RR                       & \multicolumn{1}{c|}{25.21}                     & 140.89                     \\ \hline
		Prox-G                       & \multicolumn{1}{c|}{55.50}                     & 161.44                     \\ \hline
		NEXT                         & \multicolumn{1}{c|}{80.28}                     & 264.78                     \\ \hline
		DGM                          & \multicolumn{1}{c|}{30.91}                     & 120.51                     \\ \hline
	\end{tabular}
\end{table} 
\par The simulation results for a9a and w8a datasets are shown in Fig. \ref{a9a_fig} and Fig. \ref{w8a_fig}, respectively. 
It is seen from Fig. \ref{a9a_con} and Fig. \ref{w8a_con} that the trajectories generated by DPG-RR, Prox-G, NEXT and DGM all converge to zero, implying that local variable estimates generated by all algorithms achieve consensus. In addition, the DPG-RR shows a better consensus numerical performance than NEXT and DGM algorithms. 
From Fig. \ref{a9a_f} and Fig. \ref{w8a_f}, we can see that DPG-RR converges faster than NEXT and DGM over both a9a and w8a datasets, which verify that DPG-RR is superior to algorithms with diminishing step-sizes. 
We observe that DPG-RR has a comparable convergence performance of Prox-G. While, due to the stochastic technique, DPG-RR avoids the computation of full local gradients in Prox-G. 
 The running time of different algorithms  are shown in Table \ref{run_time_table}. It is observed that the proposed stochastic DPG-RR generally costs less running time than the deterministic algorithms Prox-G and NEXT.
	\begin{figure}
		\centering
		\includegraphics[width=7.5cm,height=5.2cm]{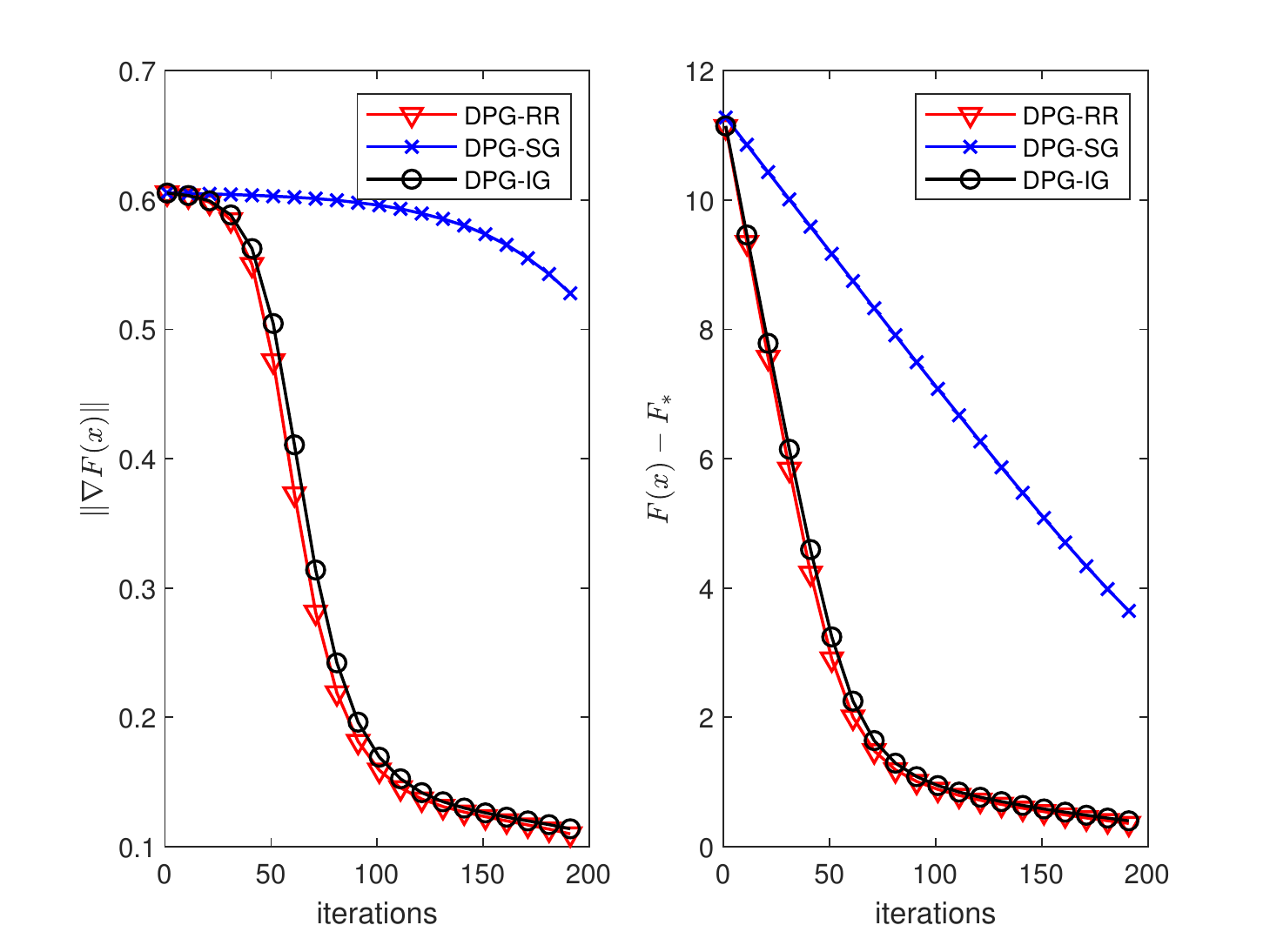}
		\caption{ Algorithms with different sampling procedures for w8a dataset}
		\label{w8a_samp_fig}
	\end{figure} 
\par To show the advantages of random reshuffling sampling procedure, we replace the random reshuffling step of DPG-RR with a stochastic sampling procedure, yielding a DPG-SG algorithm. Similarly, we replace the random reshuffling step of  DPG-RR with a deterministic incremental sampling procedure, yielding a DPG-IG algorithm. All algorithms take the same constant step-size ($0.3$) and the simulation results are shown in Fig. \ref{w8a_samp_fig}. Compared with fully stochastic sampling procedure (DPG-SG), the proposed DPG-RR shows a faster convergence. In addition, the DPG-IG algorithm shows a comparable convergence rate with DPG-RR.  These numerical results are consistent with the theoretical findings discussed in Remark \ref{sample_remark}. 


\section{Conclusion}\label{conclusion}
Making use of random reshuffling, this paper has developed one distributed stochastic proximal-gradient algorithm for solving large-scale non-smooth convex optimization over time-varying multi-agent graphs. The proposed algorithm operates only one single proximal evaluation at each epoch, owning significant advantages in scenarios where the proximal mapping is computationally expensive. In addition, the proposed algorithm owns constant step-sizes, overcoming  the degraded performance of most stochastic algorithms with diminishing step-sizes. One future research direction is to improve the convergence rate of DPG-RR by some accelerated methods, such as Nesterov's acceleration technique or variance reduction technique.  Another one is to extend the proposed algorithm for distributed non-smooth non-convex optimization, which is applicable for deep neural network models.

\section{Appendix}
\subsection{Some vital Lemmas}
At first, the following lemma characterizes the $\varepsilon_t$-subdifferential of non-smooth function $\phi$ at $\bar{x}_t$, $\partial_{\varepsilon_t}\phi(\bar{x}_t)$. 
\begin{lemma}\cite[Lemma 2]{ Schmidt2012}\label{pk_lemma}
	If $\bar{x}_t$ is an $\varepsilon_{t}$-optimal solution to \eqref{prox_ope} in the sense of \eqref{inprox_def} with $y=\bar{x}_{t-1}-\gamma ( \bar{g}_{t-1}+e_t)$, then there exists $p_t\in \mathbb{R}^d$ such that
	\begin{align}\label{pt_range}
	\|p_t\|\leq \sqrt{2 \gamma \varepsilon_t}
	\end{align}
	and
	$\frac{1}{ \gamma}(\bar{x}_{t-1}-\bar{x}_{t}-\gamma \bar{g}_{t-1}-\gamma e_{t}-p_{t})\in \partial_{\varepsilon_{t}}\phi(\bar{x}_{t}).$
\end{lemma}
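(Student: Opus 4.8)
The plan is to exhibit $p_t$ explicitly as the gap between the approximate minimizer $\bar{x}_t$ and the \emph{exact} proximal point, and then to verify both claims by elementary strong-convexity and convexity arguments. Write $y = \bar{x}_{t-1}-\gamma\bar{g}_{t-1}-\gamma e_t$ for the point at which the inexact proximal step is taken, and let $\hat{x}={\rm prox}_{\gamma,\phi}(y)$ be the unique minimizer of the strongly convex map $\Phi(z)\triangleq\frac{1}{2\gamma}\|z-y\|^2+\phi(z)$; existence and uniqueness follow from Proposition \ref{prox_proposition}. I would then set $p_t\triangleq\hat{x}-\bar{x}_t$, so that $\frac{1}{\gamma}(\bar{x}_{t-1}-\bar{x}_t-\gamma\bar{g}_{t-1}-\gamma e_t-p_t)=\frac{1}{\gamma}(y-\hat{x})$, which is exactly the exact subgradient $s$ of $\phi$ at $\hat{x}$ produced by the optimality condition $0\in\partial\phi(\hat{x})+\frac{1}{\gamma}(\hat{x}-y)$ recorded after \eqref{prox_ope}. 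The two assertions of the lemma then reduce to (i) $\|\hat{x}-\bar{x}_t\|\le\sqrt{2\gamma\varepsilon_t}$ and (ii) $s\in\partial_{\varepsilon_t}\phi(\bar{x}_t)$.

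First I would prove the norm bound \eqref{pt_range}. Since $\phi$ is convex and $\frac{1}{2\gamma}\|\cdot-y\|^2$ is $\frac{1}{\gamma}$-strongly convex, $\Phi$ is $\frac{1}{\gamma}$-strongly convex, and because $\hat{x}$ is its minimizer the first-order term vanishes, giving $\Phi(\bar{x}_t)\ge\Phi(\hat{x})+\frac{1}{2\gamma}\|\bar{x}_t-\hat{x}\|^2$. The $\varepsilon_t$-optimality of $\bar{x}_t$ in the sense of \eqref{inprox_def} reads $\Phi(\bar{x}_t)\le\varepsilon_t+\min_z\Phi(z)=\varepsilon_t+\Phi(\hat{x})$. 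Combining the two inequalities cancels $\Phi(\hat{x})$ and yields $\frac{1}{2\gamma}\|\bar{x}_t-\hat{x}\|^2\le\varepsilon_t$, that is, $\|p_t\|=\|\hat{x}-\bar{x}_t\|\le\sqrt{2\gamma\varepsilon_t}$.

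Next I would establish the $\varepsilon_t$-subdifferential inclusion. Starting from the exact subgradient inequality $\phi(q)\ge\phi(\hat{x})+s'(q-\hat{x})$ for all $q$ and rewriting $q-\hat{x}=(q-\bar{x}_t)+(\bar{x}_t-\hat{x})$, it remains to bound the residual $D\triangleq\phi(\bar{x}_t)-\phi(\hat{x})-s'(\bar{x}_t-\hat{x})$, since $s\in\partial_{\varepsilon_t}\phi(\bar{x}_t)$ in the sense of \eqref{part_phi} holds precisely when $D\le\varepsilon_t$. Here I would substitute $\gamma s=y-\hat{x}$ into $D$ and expand the squared norms in the $\varepsilon_t$-optimality bound $\phi(\bar{x}_t)-\phi(\hat{x})\le\varepsilon_t-\frac{1}{2\gamma}\|\bar{x}_t-y\|^2+\frac{1}{2\gamma}\|\hat{x}-y\|^2$; the quadratic terms recombine into a perfect square, giving $D\le\varepsilon_t-\frac{1}{2\gamma}\|\bar{x}_t-\hat{x}\|^2\le\varepsilon_t$, which closes the argument.

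The main obstacle is the bookkeeping in the last step: showing that the approximate-minimality gap in $\Phi$ translates into a genuine $\varepsilon_t$-subgradient of the \emph{nonsmooth} part $\phi$ alone. The subtlety is that $\varepsilon$-subdifferentials obey no naive sum rule, so one cannot simply split $\partial_{\varepsilon_t}\Phi$ into a quadratic contribution and a $\phi$ contribution; the explicit choice $p_t=\hat{x}-\bar{x}_t$ and the completion-of-the-square identity are what make the quadratic's contribution collapse exactly, and care is needed to track signs so that the leftover $-\frac{1}{2\gamma}\|\bar{x}_t-\hat{x}\|^2$ term is favorable. Notably, the single quantity $\frac{1}{2\gamma}\|\bar{x}_t-\hat{x}\|^2$ controls both conclusions, which is why the same constant $\sqrt{2\gamma\varepsilon_t}$ governs \eqref{pt_range}.
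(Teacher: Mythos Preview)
Your argument is correct. The choice $p_t=\hat{x}-\bar{x}_t$ with $\hat{x}={\rm prox}_{\gamma,\phi}(y)$ is exactly the right one: the strong-convexity inequality $\Phi(\bar{x}_t)\ge\Phi(\hat{x})+\frac{1}{2\gamma}\|\bar{x}_t-\hat{x}\|^2$ combined with the $\varepsilon_t$-optimality definition \eqref{inprox_def} gives \eqref{pt_range} immediately, and your completion-of-the-square computation for the residual $D$ is clean and correct --- writing $u=\bar{x}_t-\hat{x}$ and $v=y-\hat{x}=\gamma s$, one has $\|\bar{x}_t-y\|^2-\|\hat{x}-y\|^2=\|u\|^2-2u'v$, so the quadratic terms and the $-s'(\bar{x}_t-\hat{x})$ term cancel to leave $D\le\varepsilon_t-\frac{1}{2\gamma}\|u\|^2\le\varepsilon_t$, establishing $s\in\partial_{\varepsilon_t}\phi(\bar{x}_t)$.

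As for comparison: the paper does \emph{not} supply its own proof of this lemma; it is quoted verbatim as \cite[Lemma~2]{Schmidt2012} and used as a black box. Your write-up therefore adds value by making the paper self-contained on this point, and the approach you take is essentially the standard one from the cited reference.
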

\par The next lemma shows that polynomial-geometric sequences are summable, which is vital for the analysis of error sequences introduced by transformation.
\begin{lemma}\label{poly_lemma}\cite[Proposition 3]{proximal-convex}
	Let $\zeta \in (0,1)$, and let
	$$P_{k,N}=\{c_N k^N+\cdots+c_1 k+c_0 | c_j\in \mathbb{R}, j=0,\cdots,N\}$$
	denote the set of all $N$-th order polynomials of $k$, where $N \in \mathbb{N}$. Then for every polynomial $p_{k}\in P_{k,N}$,
	$$\sum_{k=1}^{\infty} p_{k} \zeta^k<\infty.$$
\end{lemma}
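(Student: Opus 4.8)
The plan is to reduce the general polynomial-geometric series to a finite collection of \emph{monomial} series by exploiting linearity of summation, and then to dispatch each monomial series with a standard convergence criterion. First I would write $p_k=\sum_{j=0}^N c_j k^j$ and, since $\zeta>0$, use the triangle inequality to bound
\[
\Big| \sum_{k=1}^{\infty} p_k \zeta^k \Big| \leq \sum_{j=0}^N |c_j| \sum_{k=1}^{\infty} k^j \zeta^k .
\]
It therefore suffices to prove that $\sum_{k=1}^{\infty} k^j \zeta^k<\infty$ for each fixed $j\in\{0,\ldots,N\}$. This reduction also resolves the only subtlety in the statement, namely that the coefficients $c_j$ may be negative: establishing convergence of each monomial series (whose terms are nonnegative) yields absolute convergence, and hence convergence, of the full series.

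Next, for each fixed $j$ I would apply the ratio test to the nonnegative terms $a_k\triangleq k^j \zeta^k$. The consecutive-term ratio is
\[
\frac{a_{k+1}}{a_k}=\left(\frac{k+1}{k}\right)^{j}\zeta \longrightarrow \zeta \quad \text{as } k\to\infty,
\]
and since $\zeta\in(0,1)$ this limiting ratio is strictly below $1$, so $\sum_{k} a_k$ converges. An equivalent route, which I would mention as an alternative, is the comparison test: fix any $\rho$ with $\zeta<\rho<1$ and note that $k^j(\zeta/\rho)^k\to 0$ because geometric decay dominates polynomial growth; hence $k^j\zeta^k\leq \rho^k$ for all sufficiently large $k$, and comparison against the convergent geometric series $\sum_k \rho^k$ finishes the argument. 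Summing the finitely many convergent monomial contributions then gives the claimed bound.

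The hard part here is essentially nonexistent: the lemma is a routine consequence of the fact that a geometric factor $\zeta^k$ with $\zeta\in(0,1)$ overwhelms any fixed polynomial in $k$. If I had to name the single point deserving care, it is purely bookkeeping, namely that the displayed inequality should be read as an assertion of \emph{absolute} convergence (the $c_j$ being of arbitrary sign), which is exactly why I pass to the monomial series with nonnegative summands before invoking the ratio or root test. The value of the lemma in this paper is entirely downstream: it is what guarantees that the error sequences $b_{e,t}$, $b_{\varepsilon,t}$, and $b_{\sqrt{\varepsilon},t}$ appearing in Propositions \ref{e_sum_pro} and in the proof of Theorem \ref{con_theo} are summable, so the proof itself is short and self-contained.
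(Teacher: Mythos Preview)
Your argument is correct and complete: the reduction to monomial series via the triangle inequality, followed by the ratio test (or the comparison against $\rho^k$ with $\zeta<\rho<1$), is the standard and cleanest way to establish absolute convergence of $\sum_k p_k\zeta^k$. Your remark about absolute convergence handling the possibly negative coefficients $c_j$ is the right observation.

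As for comparison with the paper: there is nothing to compare against. The lemma is quoted verbatim from \cite[Proposition~3]{proximal-convex} and the present paper supplies no proof of its own; it simply invokes the result. So your self-contained argument is strictly more than what appears here.
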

\par The result of this Lemma for $P_{k,N}=\{k^N\}$ will be particularly useful for the analysis in the following. Hence, we define
\begin{align}\label{kn_poly}
S_{N}^{\zeta}\triangleq \sum_{k=1}^{\infty} k^N \zeta^k<\infty.
\end{align}
\par The following lemma characterizes the variance of sampling without replacement, which is a key ingredient in our convergence result of DPG-RR.
\begin{lemma}\cite[Lemma 1]{NEURIPS2020_c8cc6e90}\label{xavelem}
	Let $X_1,\cdots,X_n \in \mathbb{R}^d$ be fixed vectors, $\bar{X}\triangleq \frac{1}{n}\sum_{i=1}^n X_i$ be their average and $\sigma^2\triangleq \frac{1}{n}\sum_{i=1}^n \|X_i-\bar{X}\|^2$ be the population variance. Fix any $k\in \{1,\cdots,n\}$, let $X_{\pi_1},\cdots, X_{\pi_k}$ be sampled uniformly without replacement from $\{X_1,\cdots, X_n\}$ and $\bar{X}_{\pi}$ be their average. Then, the sample average and variance are given by 
	\begin{align*}
	\mathbb{E}[\bar{X}_{\pi}]=\bar{X}, \quad \mathbb{E}[\|\bar{X}_{\pi}-\bar{X}\|^2]=\frac{n-k}{k(n-1)}\sigma^2.
	\end{align*}
\end{lemma}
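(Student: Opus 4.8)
The plan is to establish the two claims separately, each by exploiting the exchangeability of the sampling-without-replacement scheme rather than any concentration machinery. First I would verify the unbiasedness $\mathbb{E}[\bar{X}_{\pi}]=\bar{X}$. Since the $k$ indices are drawn uniformly without replacement, each single draw $X_{\pi_j}$ is marginally uniform over $\{X_1,\dots,X_n\}$, so $\mathbb{E}[X_{\pi_j}]=\frac{1}{n}\sum_{i=1}^n X_i=\bar{X}$ for every $j\in\{1,\dots,k\}$. Averaging over $j$ and using linearity of expectation then gives $\mathbb{E}[\bar{X}_{\pi}]=\bar{X}$ at once.

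For the second moment I would center the data by setting $Y_j\triangleq X_{\pi_j}-\bar{X}$, so that $\bar{X}_{\pi}-\bar{X}=\frac{1}{k}\sum_{j=1}^k Y_j$ with $\mathbb{E}[Y_j]=0$. Expanding the squared norm splits the expectation into diagonal and off-diagonal parts,
\begin{align*}
\mathbb{E}\big[\|\bar{X}_{\pi}-\bar{X}\|^2\big]=\frac{1}{k^2}\sum_{j=1}^k\mathbb{E}[\|Y_j\|^2]+\frac{1}{k^2}\sum_{j\neq l}\mathbb{E}[\langle Y_j,Y_l\rangle].
\end{align*}
The diagonal terms are immediate: marginal uniformity yields $\mathbb{E}[\|Y_j\|^2]=\frac{1}{n}\sum_{i=1}^n\|X_i-\bar{X}\|^2=\sigma^2$, so the first sum contributes $k\sigma^2$.

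The crux is the off-diagonal covariance, which encodes the negative correlation induced by sampling without replacement. For $j\neq l$ the ordered pair $(X_{\pi_j},X_{\pi_l})$ is uniform over the $n(n-1)$ ordered pairs of distinct indices, hence $\mathbb{E}[\langle Y_j,Y_l\rangle]=\frac{1}{n(n-1)}\sum_{a\neq b}\langle X_a-\bar{X},X_b-\bar{X}\rangle$. The key identity I would invoke is that the centered vectors sum to zero, $\sum_{a=1}^n (X_a-\bar{X})=0$, which gives
\begin{align*}
0=\Big\|\sum_{a=1}^n (X_a-\bar{X})\Big\|^2=\sum_{a\neq b}\langle X_a-\bar{X},X_b-\bar{X}\rangle+\sum_{a=1}^n\|X_a-\bar{X}\|^2,
\end{align*}
so the cross-sum equals $-n\sigma^2$ and each off-diagonal expectation equals $-\sigma^2/(n-1)$. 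Counting the $k(k-1)$ off-diagonal pairs and substituting, the computation simplifies as $\frac{1}{k^2}\big(k\sigma^2-\frac{k(k-1)}{n-1}\sigma^2\big)=\frac{\sigma^2}{k}\cdot\frac{n-k}{n-1}$, which is the claimed formula.

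The only genuine obstacle is correctly treating the covariance of two distinct draws; everything else is routine. Once the zero-sum identity for the centered vectors is in hand, the negative sign and the exact finite-population factor $\frac{n-k}{k(n-1)}$ emerge from elementary counting, and no martingale or tail-bound argument is required. I would close by noting the two sanity checks $k=1$ (recovering $\sigma^2$) and $k=n$ (giving $0$, as the full sample reproduces $\bar{X}$ exactly), which confirm the constant.
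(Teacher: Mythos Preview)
Your proof is correct and is the standard exchangeability computation for sampling without replacement. Note, however, that the paper does not supply its own proof of this lemma: it is quoted directly from \cite[Lemma~1]{NEURIPS2020_c8cc6e90} and used as a black box, so there is no in-paper argument to compare against.
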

Finally, we provide a lemma on the boundness of one special nonnegative sequence, which is vital for the convergence analysis.
\begin{lemma}{\cite[Lemma 1] {Schmidt2012}}\label{usseq}
	Assume that the nonnegative sequence $\{u_T\}$ satisfies the following recursion for all $k\geq 1$
	\begin{align}
	u_T^2\leq S_T+\sum_{t=1}^T \lambda_t u_t,
	\end{align}
	with $\{S_T\}$ an increasing sequence, $S_0\geq u_0^2$ and $\lambda_t\geq 0$ for all $t$. Then, for all $T\geq 1$,
	\begin{align}
	u_T\leq \frac{1}{2} \sum_{t=1}^T \lambda_t +\Big(S_T+\big(\frac{1}{2}\sum_{t=1}^T \lambda_t\big)^2\Big)^{\frac{1}{2}}.
	\end{align}
\end{lemma}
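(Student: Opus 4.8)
The plan is to reduce the self-referential recursion to a single scalar quadratic inequality by passing to the running maximum of the sequence. The essential difficulty is that the terms $u_t$ appear inside the sum on the right-hand side of $u_T^2\le S_T+\sum_{t=1}^T\lambda_t u_t$, so one cannot telescope or iterate the bound directly; the running-maximum device is precisely what decouples the recursion and converts it into something solvable in closed form.

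First I would define $w_T\triangleq\max_{0\le t\le T}u_t$ and let $k\in\{0,\dots,T\}$ be an index at which this maximum is attained, so that $w_T=u_k$. Since every $u_t$ is nonnegative and $w_T\ge u_T$, it suffices to bound $w_T$ by the claimed right-hand side, and the conclusion for $u_T$ then follows immediately.

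Next I treat the two cases for $k$. If $k\ge 1$, I apply the hypothesized recursion at index $k$ and exploit two monotonicity facts: $S_k\le S_T$ because $\{S_T\}$ is increasing and $k\le T$, and $u_t\le w_T$ for every $t\le k$ together with $\lambda_t\ge 0$. These give
\[
w_T^2=u_k^2\le S_k+\sum_{t=1}^k\lambda_t u_t\le S_T+w_T\sum_{t=1}^T\lambda_t,
\]
which is a quadratic inequality $w_T^2-\big(\sum_{t=1}^T\lambda_t\big)w_T-S_T\le 0$ in the single unknown $w_T$. If instead $k=0$, the recursion is unavailable, but the assumption $u_0^2\le S_0\le S_T$ applies directly, yielding $w_T=u_0\le\sqrt{S_T}$, which is already dominated by the target bound since the omitted terms are nonnegative.

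Finally, in the case $k\ge 1$ I solve the quadratic inequality: because $w_T\ge 0$, it must lie at or below the larger root, so
\[
w_T\le\tfrac12\sum_{t=1}^T\lambda_t+\tfrac12\Big(\big(\textstyle\sum_{t=1}^T\lambda_t\big)^2+4S_T\Big)^{1/2}=\tfrac12\sum_{t=1}^T\lambda_t+\Big(S_T+\big(\tfrac12\textstyle\sum_{t=1}^T\lambda_t\big)^2\Big)^{1/2}.
\]
Since $u_T\le w_T$, this is exactly the asserted bound, and the $k=0$ estimate satisfies the same inequality a fortiori. I expect the only points requiring genuine care to be the edge case $k=0$ (where $S_0\ge u_0^2$ must be invoked rather than the recursion) and the sign bookkeeping when discarding the smaller, possibly negative, root of the quadratic; the remaining algebra is routine.
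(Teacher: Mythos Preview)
The paper does not prove this lemma; it is quoted verbatim from \cite[Lemma~1]{Schmidt2012} and used as a black box in the proof of Theorem~\ref{con_theo}. Your argument via the running maximum $w_T=\max_{0\le t\le T}u_t$, the case split on the index $k$ where the maximum is attained, and the resulting scalar quadratic inequality is correct and is in fact the standard proof given in the cited reference, so there is nothing to compare.
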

\subsection{Proof of Proposition \ref{aver_up}}\label{center_pro}
\begin{proof}
	By taking the average of \eqref{x_ji_up} over $m$ agents,
	\begin{align}\label{barx_up}
	\bar{x}_t^{i+1}=&\bar{x}_t^{i}-\gamma (\frac{1}{m}\sum_{j=1}^m \nabla f_{j,\pi_t^i}(\bar{x}_{t}^i) +e_t^{i})\notag\\
	=&\bar{x}_t^{i}-\gamma (\nabla \mathbf{f}_{\pi_t^i}(\bar{x}_{t}^i) +e_t^{i}),
	\end{align}
	where $e_t^i=\frac{1}{m}\sum_{j=1}^m (\nabla  f_{j,\pi_t^i}(x_{j,t}^i)-\nabla f_{j,\pi_t^i}(\bar{x}_{t}^i))$ and $ \mathbf{f}_{\pi_t^i}(x)=\frac{1}{m}\sum_{j=1}^m  f_{j,\pi_t^i}(x)$. 
	Because of the Lipschitz-continuity of the gradient of $f_{j,\pi_t^i}(x)$, $\|e_t^i\|$ satisfies
	\begin{align}
	\|e_t^i\|\leq \frac{L}{m} \sum_{j=1}^m \|x_{j,t}^i-\bar{x}_t^i\|.
	\end{align}
	\par Recall that
	$z_{t+1}\triangleq {\rm prox}_{\gamma, \phi}(\bar{v}_t)={\rm argmin}_x\{\phi(x)+\frac{1}{2\gamma}\|x-\bar{v}_t\|^2\}$,
	which is the result of the exact centralized proximal step. Then, we relate $z_{t+1}$ and $\bar{x}_{t+1}$ by formulating the latter as an inexact proximal step with error $\varepsilon_{t+1}$. A simple algebraic expansion gives 
	\begin{align*}
	&\phi(\bar{x}_{t+1})+\frac{1}{2\gamma}\|\bar{x}_{t+1}-\bar{v}_t\|^2\\
	\leq & \phi(z_{t+1})+G_{\phi} \|\bar{x}_{t+1}-z_{t+1}\|+\frac{1}{2\gamma}\Big(\|z_{t+1}-\bar{v}_t\|^2\\
	&+2\langle z_{t+1}-\bar{v}_t,\bar{x}_{t+1}-z_{t+1}\rangle+\|\bar{x}_{t+1}-z_{t+1}\|^2\Big)\\
	\leq &\min_{z\in\mathbb{R}^d}\{\phi(z)+\frac{1}{2\gamma}\|z-\bar{v}_t\|^2\}+\frac{1}{2\gamma} \|\bar{x}_{t+1}-z_{t+1}\|^2\\
	&+\|\bar{x}_{t+1}-z_{t+1}\|(G_{\phi}+\frac{1}{\gamma}\|z_{t+1}-\bar{v}_t\| ),
	\end{align*}
	{where we have used the fact that $\phi(z_{t+1}) + \frac{1}{2\gamma}\Vert z_{t+1}-\bar{v}_t \Vert^2 = \min_{z}\{\phi(z)+\frac{1}{2\gamma}\Vert z-\bar{v}_t \Vert^2 \}$ and $\langle z_{t+1}-\bar{v}_t,\bar{x}_{t+1}-z_{t+1}\rangle\leq \|z_{t+1}-\bar{v}_t\|\|\bar{x}_{t+1}-z_{t+1}\|$ in the last inequality.}
	\par Therefore, we can write
	\begin{align}
	\bar{x}_{t+1}\in {\rm prox}_{\gamma, \phi}^{\varepsilon_{t+1}}(\bar{v}_t),
	\end{align}
	where $\varepsilon_{t+1}=\|\bar{x}_{t+1}-z_{t+1}\|(G_{\phi}+\frac{1}{\gamma}\|z_{t+1}-\bar{v}_t\| )+\frac{1}{2\gamma} \|\bar{x}_{t+1}-z_{t+1}\|^2.$
	\par By definition, $z_{t+1}$ satisfies $\frac{1}{\gamma}(\bar{v}_t-z_{t+1})\in \partial \phi(z_{t+1})$, and therefore its norm is bounded by $G_{\phi}$. As a result,
	\begin{align}\label{varep1}
	\varepsilon_{t+1}\leq 2G_{\phi}\|\bar{x}_{t+1}-z_{t+1}\|+\frac{1}{2\gamma}\|\bar{x}_{t+1}-z_{t+1}\|^2.
	\end{align}
	Combined with the nonexpansiveness of the proximal operator,
	\begin{align}\label{xz1}
	\|\bar{x}_{t+1}-z_{t+1}\|\leq& \frac{1}{m} \sum_{j=1}^m\|{\rm prox}_{\gamma, \phi}(v_{j,t})-{\rm prox}_{\gamma, \phi}(\bar{v}_t)\|\notag\\
	\leq &\frac{1}{m}\sum_{j=1}^m \|v_{j,t} -\bar{v}_t\|.
	\end{align}
	Finally, substituting \eqref{xz1} to \eqref{varep1}, we obtain \eqref{norm_vare}.
\end{proof}

\subsection{Proof of Proposition \ref{e_sum_pro}}\label{summ_proof}
\par {We first define two useful quantities $\Gamma\triangleq 2\frac{1+\eta^{-(m-1)B}}{1-\eta^{(m-1)B}}$ and $\Xi\triangleq (1-\eta^{(m-1)B})^{\frac{1}{(m-1)B}}$.} Then, we provide some properties of the local variables generated by the proposed algorithm in the following Lemma.
\begin{lemma}\label{ite_seq}
	Under Assumptions \ref{f_assump} and \ref{net_assum}, for each iteration $t\geq 2$,
		\begin{align}\label{xn}
		\sum_{j=1}^m \|x_{j,t}^n\|\leq \sum_{j=1}^m\|x_{j,t-1}^n\|+m\gamma (G_{\phi}+n G_{f}),
		\end{align}
		\begin{align}\label{ite_b}
		\sum_{j=1}^m \|x_{j,t+1}-x_{j,t}\|
		\!\leq \! 2m\Gamma \Xi^{t-1} \sum_{l=1}^m\|x_{l,t-1}^n\|\!+\!m\gamma (G_{\phi}\!+\!nG_{f}),
		\end{align}
		\begin{align}\label{ite_c}
		\sum_{j=1}^m\|x_{j,t}-\bar{x}_{t} \|
		\leq  2 m \Gamma \Xi^t \sum_{l=1}^m\|x_{l,t-1}^n\|.
		\end{align} 
\end{lemma}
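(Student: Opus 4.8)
The plan is to derive all three inequalities from one structural fact about the averaging matrix together with three elementary per-iteration movement bounds. The structural fact is the standard geometric mixing estimate for products of doubly stochastic matrices: under Assumption \ref{net_assum} the entries of $\Phi$ converge to the uniform average geometrically, so that the weights produced at iteration $s$ obey $\bigl|\lambda_{jl,s}-\tfrac{1}{m}\bigr|\le \Gamma\,\Xi^{\,s+1}$ for all $j,l$, with $\Gamma$ and $\Xi\in(0,1)$ the constants defined above (double stochasticity supplies the limit $1/m$, while Assumption \ref{net_assum}(b)--(c) supply the rate). The three movement bounds are: (i) from the optimality condition of the proximal step, $\tfrac{1}{\gamma}(v_{j,t}-x_{j,t+1})\in\partial\phi(x_{j,t+1})$, so Assumption \ref{f_assump}(c) yields $\|x_{j,t+1}-v_{j,t}\|\le\gamma G_\phi$, and likewise $\|z_{t+1}-\bar v_t\|\le\gamma G_\phi$; (ii) the $n$ inner gradient steps satisfy $\|x_{j,t}^n-x_{j,t}\|\le\gamma\sum_{i=0}^{n-1}\|\nabla f_{j,\pi_t^i}(x_{j,t}^i)\|\le\gamma nG_f$ by Assumption \ref{f_assump}(d), and the same bound holds at the averaged level, $\|\bar x_t^n-\bar x_t\|\le\gamma nG_f$; and (iii) since $\Phi$ is column stochastic the consensus map preserves both the sum of norms, $\sum_{j}\|v_{j,t}\|\le\sum_{l}\|x_{l,t}^n\|$, and the average, $\bar v_t=\bar x_t^n$.

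For \eqref{xn} I would chase these bounds once around the loop carrying $x_{\cdot,t-1}^n$ to $x_{\cdot,t}^n$: the consensus map $x_{l,t-1}^n\mapsto v_{j,t-1}$ preserves the sum of norms by (iii); the proximal step $v_{j,t-1}\mapsto x_{j,t}$ adds at most $\gamma G_\phi$ per agent by (i); and the inner gradient loop $x_{j,t}\mapsto x_{j,t}^n$ adds at most $\gamma nG_f$ per agent by (ii). Summing the three increments over the $m$ agents reproduces \eqref{xn}.

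For \eqref{ite_c} I would begin from $\|x_{j,t}-\bar x_t\|\le\frac{1}{m}\sum_{k=1}^m\|x_{j,t}-x_{k,t}\|$, use the nonexpansiveness of the proximal operator (Proposition \ref{prox_proposition}) to replace each $\|x_{j,t}-x_{k,t}\|$ by $\|v_{j,t-1}-v_{k,t-1}\|$, and expand $v_{j,t-1}-v_{k,t-1}=\sum_{l}(\lambda_{jl,t-1}-\lambda_{kl,t-1})x_{l,t-1}^n$. The mixing estimate then gives $|\lambda_{jl,t-1}-\lambda_{kl,t-1}|\le|\lambda_{jl,t-1}-\tfrac{1}{m}|+|\lambda_{kl,t-1}-\tfrac{1}{m}|\le 2\Gamma\Xi^{t}$, and summing over $j$ produces \eqref{ite_c}.

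Finally, \eqref{ite_b} is the step I expect to be the real work, because $x_{j,t+1}$ and $x_{j,t}$ are produced by two different consensus rounds. I would insert the common reference point $\bar x_t$ and the exact centralized proximal iterate $z_{t+1}=\mathrm{prox}_{\gamma,\phi}(\bar v_t)$, splitting $\|x_{j,t+1}-x_{j,t}\|\le\|x_{j,t+1}-z_{t+1}\|+\|z_{t+1}-\bar x_t\|+\|\bar x_t-x_{j,t}\|$. The last term is controlled by \eqref{ite_c}; the middle term is at most $\gamma(G_\phi+nG_f)$ by (i)--(ii) at the averaged level (using $\bar v_t=\bar x_t^n$); and the first term decays geometrically through nonexpansiveness, $\|x_{j,t+1}-z_{t+1}\|\le\|v_{j,t}-\bar v_t\|\le\Gamma\Xi^{t+1}\sum_{l}\|x_{l,t}^n\|$. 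Summing over agents and invoking \eqref{xn} to re-express $\sum_{l}\|x_{l,t}^n\|$ through $\sum_{l}\|x_{l,t-1}^n\|$, together with $\Xi^{t+1}\le\Xi^{t}\le\Xi^{t-1}$ to merge the geometric contributions, collapses the estimate into $2m\Gamma\Xi^{t-1}\sum_{l}\|x_{l,t-1}^n\|$ plus the movement term $m\gamma(G_\phi+nG_f)$. The main obstacle is exactly this bookkeeping: keeping the $\Xi$ exponents consistent across two consecutive consensus rounds and verifying that the several geometric terms are absorbed into the stated constant rather than an inflated one.
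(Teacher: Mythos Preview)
Your arguments for \eqref{xn} and \eqref{ite_c} are essentially identical to the paper's: the same three movement bounds (proximal step moves at most $\gamma G_\phi$, inner loop moves at most $\gamma nG_f$, consensus preserves the sum of norms) and the same use of proximal nonexpansiveness followed by the mixing estimate $\bigl|\lambda_{jl,s}-\tfrac{1}{m}\bigr|\le\Gamma\Xi^{\,s}$.

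For \eqref{ite_b} you take a genuinely different route. The paper does \emph{not} insert the centralized iterate $z_{t+1}$ or the average $\bar x_t$. Instead it unrolls $x_{j,t+1}$ directly as
\[
x_{j,t+1}=\sum_{l}\lambda_{jl,t}\,x_{l,t}-\gamma\sum_{l}\lambda_{jl,t}\sum_{i=0}^{n-1}\nabla f_{l,\pi_t^i}(x_{l,t}^i)-\gamma z_{j,t+1},
\]
subtracts $x_{j,t}=\sum_l\lambda_{jl,t}x_{j,t}$, and bounds the resulting consensus piece $\sum_{j,l}\lambda_{jl,t}\|x_{l,t}-x_{j,t}\|$ by passing through nonexpansiveness to $\|v_{l,t-1}-v_{j,t-1}\|$ and then applying the mixing bound at the \emph{previous} round only. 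This yields the single geometric term $2m\Gamma\Xi^{t-1}\sum_l\|x_{l,t-1}^n\|$ plus the clean movement term $m\gamma(G_\phi+nG_f)$, with no need to invoke \eqref{xn}. Your split through $z_{t+1}$ and $\bar x_t$ produces \emph{two} geometric contributions---one at level $t$ from $\|v_{j,t}-\bar v_t\|$ and one at level $t-1$ from \eqref{ite_c}---which, after substituting \eqref{xn}, collapse to roughly $3m\Gamma\Xi^{t-1}\sum_l\|x_{l,t-1}^n\|$ together with an extra $m\Gamma\Xi^{t}\cdot m\gamma(G_\phi+nG_f)$ correction in the movement term. So your approach is correct and adequate for every downstream summability argument, but the bookkeeping you flag as ``the real work'' does inflate the constant; the paper's direct expansion sidesteps that entirely.
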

\begin{proof}
	By \eqref{xj_up}, there exists $z_{j,t+1}\in \partial \phi(x_{j,t+1})$ such that
	\begin{align}\label{z_de}
		x_{j,t+1}=v_{j,t}-\gamma z_{j,t+1}.
	\end{align}
	Since function $\phi$ has bounded subgradients,
	\begin{align}\label{xsubv}
		\|x_{j,t+1}-v_{j,t}\|\leq \gamma G_{\phi}.
	\end{align}
	\par (a) By the algorithm \ref{pg_rr_algo}, we have
	\begin{align*}
		x_{j,t}^{n}=x_{j,t}^0-\gamma \sum_{i=0}^{n-1} \nabla f_{j,\pi_t^{i}}(x_{j,t}^i).
	\end{align*}
	Taking norm of the above equality and summing over $j$,
	{
	\begin{align}\label{xjtn}
		\sum_{j=1}^m \|x_{j,t}^{n}\|=&\sum_{j=1}^m \|x_{j,t}^0-\gamma \sum_{i=0}^{n-1} \nabla f_{j,\pi_t^{i}}(x_{j,t}^i)\|\notag\\
		\leq & \sum_{j=1}^m \|x_{j,t}\|+\gamma m n G_f.
	\end{align}
	}
	By \eqref{xsubv}, $\|x_{j,t+1}\|-\|v_{j,t}\|\leq \gamma G_{\phi}$ holds. Since $v_{j,t}$ is a convex combination of $\{x_{l,t}^n\}_{l=1}^m$ and $\sum_{l=1}^m \lambda_{jl,t}=1$, 
	\begin{align}\label{vsumx}
		\sum_{j=1}^m\|v_{j,t}\|\leq \sum_{j=1}^m \|x_{j,t}^n\|.
	\end{align}
	Then, substituting the fact that $\|x_{j,t+1}\|-\|v_{j,t}\|\leq \gamma G_{\phi}$ and \eqref{vsumx} to \eqref{xjtn}, we obtain
	\begin{align*}
		\sum_{j=1}^m \|x_{j,t}^n\|\leq \sum_{j=1}^m\|x_{j,t-1}^n\|+m\gamma (G_{\phi}+nG_{f}).
	\end{align*}
	\par (b) By \eqref{z_de} and the proposed algorithm, 
	\begin{align}\label{xjtpuls}
		x_{j,t+1}=&v_{j,t}-\gamma z_{j,t+1}\notag\\
		\overset{\eqref{v_ji_up}}{=}&\sum_{l=1}^m\lambda_{jl,t}x_{l,t}^n-\gamma z_{j,t+1}\notag\\
		\overset{\eqref{x_ji_up}}{=}&\sum_{l=1}^m\lambda_{jl,t}\Big(x_{l,t}^0-\gamma \sum_{i=0}^{n-1}\nabla f_{l,\pi_t^i}(x_{l,t}^i)\Big)-\gamma z_{j,t+1}.
	\end{align}
	Then, subtracting $x_{j,t}$ from both sides of \eqref{xjtpuls} and taking norm, we obtain
	\begin{align}\label{xsbus}
		&\sum_{j=1}^m \|x_{j,t+1}-x_{j,t}\|\notag\\
		\leq &\sum_{j=1}^m\sum_{l=1}^m\lambda_{jl,t}\|x_{l,t}^0-x_{j,t}\|+m\gamma (G_{\phi}+nG_f)\notag\\
		= &\sum_{j=1}^m\sum_{l=1}^m\lambda_{jl,t}\|x_{l,t}-x_{j,t}\|+m\gamma (G_{\phi}+nG_f).
	\end{align}
	For the first term in \eqref{xsbus}, by the nonexpansiveness of the proximal operator,
	$$\|x_{l,t}-x_{j,t}\|\leq \|v_{l,t-1}-v_{j,t-1}\|.$$
	In addition, by \cite[Proposition 1]{distri-nedich}, {the bound of the distance between iterates $v_{j,t}$ and $\bar{v}_{t}$ satisfies}
	\begin{align}\label{vsubbarv}
		\|v_{j,t}-\bar{v}_{t}\|=&\Big\|\sum_{l=1}^m \big(\lambda_{jl,t} x_{l,t}^n-\frac{1}{m}x_{l,t}^n\big)\Big\|\notag\\
		\leq &\sum_{l=1}^m \big|\lambda_{jl,t}-\frac{1}{m}\big|\|x_{l,t}^n\|\notag\\
		\leq & \Gamma \Xi^t \sum_{l=1}^m\|x_{l,t}^n\|.
	\end{align}
	Then, by transformation,
	\begin{align}\label{vsubs}
		&\sum_{j=1}^m\sum_{l=1}^m \lambda_{jl,t}\|v_{l,t-1}-v_{j,t-1}\|\notag\\
		\leq & \sum_{j=1}^m\sum_{l=1}^m \lambda_{jl,t}(\|v_{l,t-1}-\bar{v}_{t-1}\|+\|v_{j,t-1}-\bar{v}_{t-1}\|)\notag\\
		\leq & 2m\Gamma \Xi^{t-1} \sum_{l=1}^m\|x_{l,t-1}^n\|.
	\end{align}
	Substituting \eqref{vsubs} to \eqref{xsbus},
	\begin{align*}
		&\sum_{j=1}^m \|x_{j,t+1}-x_{j,t}\|\notag\\
		\leq & 2m\Gamma \Xi^{t-1} \sum_{l=1}^m\|x_{l,t-1}^n\|+m\gamma (G_{\phi}+nG_{f}).
	\end{align*}
	\par (c) By the definition of $\bar{x}_t=\frac{1}{m}\sum_{p=1}^m x_{p,t}$, $\sum_{j=1}^m \|x_{j,t}-\bar{x}_t\|$ satisfies
	\begin{align*}
		&\sum_{j=1}^m\|x_{j,t}-\frac{1}{m}\sum_{p=1}^m x_{p,t} \|\\
		=&\sum_{j=1}^m\|\frac{1}{m}\sum_{p=1}^m(x_{j,t}-x_{p,t})\|\\
		\leq& \frac{1}{m}\sum_{j=1}^m\sum_{p=1}^m\|v_{j,t-1}-v_{p,t-1}\|\\
		\leq &\frac{1}{m}\sum_{j=1}^m\sum_{p=1}^m(\|v_{j,t-1}-\bar{v}_{t-1}\|+\|v_{p,t-1}-\bar{v}_{t-1}\|)\\
		\leq & 2 m \Gamma \Xi^{t-1} \sum_{l=1}^m\|x_{l,t-1}^n\|,
	\end{align*}
	where the first inequality follows from Proposition \ref{prox_proposition} (b) and the last inequality follows from \eqref{vsubbarv}.
\end{proof}

\par The following Lemma prove that the sequence $\sum_{j=1}^m \|x_{j,t}^n\|$ is upper bounded by a polynomial-geometric sequence, which is vital in the discussions of the summability of the error sequences.
\begin{lemma}\label{qbound_lemma}
	Under Assumptions \ref{f_assump} and \ref{net_assum}, there exist non-negative scalars $C_q=C_q(x_{1,1}^n,\cdots,x_{m,1}^n)$, $C_q^1=C_q^1(m,\Gamma,C_q,C_q^{2})$, $C_q^{2}=C_q^{2}(m,\gamma,G_g,G_h)$ such that for iteration $t\geq 1$,
	\begin{align*}
	\sum_{j=1}^m\|x_{j,t}^n\|\leq C_q+C_q^1 t+C_q^2 t^2.
	\end{align*}
\end{lemma}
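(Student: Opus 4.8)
The plan is to treat the scalar sequence $q_t \triangleq \sum_{j=1}^m \|x_{j,t}^n\|$ as the object to be bounded and to show that it grows at most linearly in $t$, which is stronger than, and hence implies, the claimed quadratic envelope. The cleanest starting point is the one-step estimate \eqref{xn} of Lemma \ref{ite_seq}, which already reads $q_t \leq q_{t-1} + m\gamma(G_\phi + n G_f)$ for every $t \geq 2$. First I would unroll this recursion from $s=2$ up to $s=t$, obtaining $q_t \leq q_1 + (t-1)\,m\gamma(G_\phi + nG_f)$ for all $t\geq 1$. Reading off the coefficients, one may take $C_q = q_1 = \sum_{j=1}^m\|x_{j,1}^n\|$ (a function of the initial inner iterates only), identify the per-epoch increment $m\gamma(G_\phi+nG_f)$ controlled by $(m,\gamma,G_f,G_\phi)$ as the growth constant, and allow the quadratic coefficient to be nonnegative but otherwise free. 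The base case $t=1$ is immediate since $q_1=C_q$.

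The reason the statement is phrased with a genuine quadratic term and with $C_q^1$ depending on $\Gamma$ is that the same conclusion can be reached without invoking \eqref{xn} directly, by accumulating the per-step displacements \eqref{ite_b} instead. In that route I would write $q_t \leq r_t + \gamma m n G_f$ with $r_t\triangleq\sum_j\|x_{j,t}\|$ (this is \eqref{xjtn}), bound $r_t \leq r_1 + \sum_{s=2}^{t-1}\sum_j\|x_{j,s+1}-x_{j,s}\|$ by the triangle inequality (the $s=1$ term being a fixed constant absorbed into $C_q$), and then substitute \eqref{ite_b}, namely $\sum_j\|x_{j,s+1}-x_{j,s}\| \leq 2m\Gamma\Xi^{s-1}q_{s-1} + m\gamma(G_\phi+nG_f)$. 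The constant part of each summand contributes the linear term $(t-1)\,m\gamma(G_\phi+nG_f)$, while the consensus part contributes $2m\Gamma\sum_{s}\Xi^{s-1}q_{s-1}$, which is where the factor $\Gamma$ enters $C_q^1$.

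The heart of the argument, and the step I expect to be the main obstacle, is controlling this last sum, because $\Xi^{s-1}q_{s-1}$ couples the geometric factor $\Xi\in(0,1)$ with the very sequence $q_{s-1}$ we are trying to bound. I would close it by a single induction on $t$: assuming $q_s \leq C_q + C_q^1 s + C_q^2 s^2$ for all $s<t$, the product $\Xi^{s-1}q_{s-1}$ becomes a polynomial-geometric sequence, so Lemma \ref{poly_lemma} (equivalently the finite constants $S_N^{\Xi}$ of \eqref{kn_poly}) bounds $\sum_s \Xi^{s-1}q_{s-1}$ by a constant depending on $m,\Gamma,\Xi$ and on $C_q,C_q^1,C_q^2$; feeding this back shows $q_t$ is again at most affine in $t$, and the induction closes once the coefficients are taken large enough. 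The delicate point is that the absorbing constant is itself a function of the coefficients being fixed, so the induction does not literally close for free: one must check that the resulting fixed-point inequalities (of the form $C_q(1-2m\Gamma S_0^{\Xi}) \geq \cdots$) are solvable. The robust way to sidestep this circularity is precisely to first pin the linear bound down via the clean recursion \eqref{xn}, and only then record the coarser quadratic envelope with the stated constant dependencies, $C_q^2$ governed by $(m,\gamma,G_f,G_\phi)$ and $C_q^1$ by $(m,\Gamma,C_q,C_q^2)$.
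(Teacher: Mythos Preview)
Your proposal is correct. In fact your first route---simply iterating the one-step recursion \eqref{xn} to get $q_t\le q_1+(t-1)m\gamma(G_\phi+nG_f)$---already proves a linear (hence quadratic) envelope with no further work, and is more elementary than what the paper does.

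The paper does \emph{not} take that shortcut. It instead follows essentially your second route: it weakens \eqref{xn} to the form $q_{t+1}\le q_t+m\gamma(G_\phi+nG_f)+\sum_j\|x_{j,t}-x_{j,t-1}\|$, bounds the last term via \eqref{ite_b}, inserts the induction hypothesis $q_s\le C_q+C_q^1 s+C_q^2 s^2$ into the resulting consensus sum $2m\Gamma\sum_p\Xi^p q_p$, and invokes Lemma~\ref{poly_lemma} through the constants $S_0^\Xi,S_1^\Xi,S_2^\Xi$. The circularity you flagged is handled in the paper by writing down explicit choices $C_q^2=\tfrac{\gamma m}{2}(G_\phi+nG_f)$ and $C_q^1=\big(2m\Gamma C_q S_0^\Xi+(2m\Gamma S_2^\Xi-1)C_q^2\big)/\big(2m\Gamma S_1^\Xi-1\big)$ and checking that the inductive inequality closes with these values; this is what forces the stated dependencies $C_q^1=C_q^1(m,\Gamma,C_q,C_q^2)$ and explains why a quadratic rather than linear template is used. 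Your route is shorter and yields a sharper bound; the paper's route is the one that matches the constant structure announced in the lemma statement.
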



\begin{proof}
	This proof is inspired by \cite[Lemma 1]{proximal-convex}. We proceed by induction on $t$. First, we show that the result holds for $t=1$ by choosing $C_q=\sum_{j=1}^m\|x_{j,1}^n\|$. It suffices to show that $\sum_{j=1}^m\|x_{j,1}^n\|$ is bounded given the initial points $x_{j,0}^n$.
	\par Indeed, by \eqref{xn},
	$$\sum_{j=1}^m\|x_{j,1}^n\|\leq \sum_{j=1}^m \|x_{j,0}^n\|+m\gamma (G_{\phi}+nG_f)<\infty.$$
	Therefore, $C_q=\sum_{j=1}^m\|x_{j,1}^n\|<\infty$ is a valid choice.
	\par Now suppose the result in Lemma \ref{qbound_lemma} holds for some positive integer $t\geq 1$. We show that it also holds for $t+1$. We transform \eqref{xn} in Lemma \ref{ite_seq} to 
	\begin{align}\label{scale_a}
	\sum_{j=1}^m\|x_{j,t+1}^n\|\!\leq\! \sum_{j=1}^m\! \|x_{j,t}^n\|\!+\!m\gamma (G_{\phi}\!+\! nG_f)\!+\!\sum_{j=1}^m \!\|x_{j,t}\!-\!x_{j,t-1}\|.
	\end{align}
	\par For the last term in \eqref{scale_a}, by transforming \eqref{ite_b} in Lemma \ref{ite_seq}, we obtain
	\begin{align}\label{nn1}
	\sum_{j=1}^m \!\|x_{j,t+1}\!-\!x_{j,t}\|\!
	\leq\!2m\Gamma\!\sum_{p=1}^{t-1}\!\Xi^{p}\!\sum_{j=1}^m\! \|x_{j,l}^n\|\!+\!(t\!-\!1) m \gamma (\!G_{\phi}\!+\! n G_f\!).
	\end{align}
	Then, substituting the induction hypothesis for $t$ into \eqref{nn1}, we have 
	\begin{align*}
		\sum_{j=1}^m \|x_{j,t+1}-x_{j,t}\|
		\leq& 2m\Gamma\sum_{p=1}^{t-1}\Xi^{p}(C_q+C_q^1 p+C_q^2 p^2)\\
		&+(t-1) m \gamma(G_{\phi}+n G_f).
	\end{align*}
   By Lemma \ref{poly_lemma} and \eqref{kn_poly}, there exist constants $S_{0}^{\Xi}, S_{1}^{\Xi}, S_2^{\Xi}$ such that 
	$$\sum_{p=1}^{\infty} \Xi^p (C_q+C_q^1 p+C_q^2 p^2)\leq C_q S_0^{\Xi}+C_q^1S_1^{\Xi}+C_q^2 S_2^{\Xi}.$$
 By induction hypothesis and \eqref{scale_a},
	\begin{align}\label{xtplus}
		\sum_{j=1}^m \|x_{j,t+1}^n\|\leq& C_q+C_q^1 t+C_q^2 t^2+t m \gamma (G_{\phi}+nG_f)\notag\\
		&+2m\Gamma (C_q S_0^{\Xi}+C_q^1S_1^{\Xi}+C_q^2 S_2^{\Xi}).
	\end{align}
 Take the coefficients $C_q$, $C_q^1$ and $C_q^2$ as
	\begin{align*}
		C_q&=\sum_{j=1}^m \|x_{j,1}^n\|,\\
		C_q^1&=\frac{2m\Gamma C_q S_0^{\Xi}+(2m\Gamma S_2^{\Xi}-1)C_q^2}{2m\Gamma S_1^{\Xi}-1},\\
		C_q^2&=\frac{\gamma m}{2}(G_{\phi}+nG_f).
	\end{align*}
Then, comparing coefficients, we see that the right-hand side of \eqref{xtplus} has an upper bound $C_q+C_q^1 (t+1)+C_q^2(t+1)^2$, which implies that the induction hypothesis holds for $t+1$.
\end{proof}
\textbf{Proof of Proposition \ref{e_sum_pro}:}
\begin{proof}
	\par (a) By \eqref{ite_c} in Lemma \ref{ite_seq},
	\begin{align}\label{xsubbar}
		\|x_{j,t}-\bar{x}_{t} \|
		\leq  2  \Gamma \Xi^{t-1} \sum_{l=1}^m\|x_{l,t-1}^n\|.
	\end{align} 
	Then, by Lemma \ref{qbound_lemma}, \eqref{norm_e} and \eqref{xsubbar}, there exist $C_q$, $C_q^1$, $C_q^2>0$ such that 
		\begin{align}\label{bet}
			{\|e_t^i\|}
			&\leq \underbrace{2 L \Gamma \Xi^{t-1} (C_q\!+\!C_q^1 (t-1)\!+\!C_q^2 (t-1)^2)}_{b_{e,t}}+\gamma C_0,
		\end{align}
	where $C_0=2 L n G_f$ and $b_{e,t}$ is a polynomial-geometric sequence.
	\par (b) It follows from (\ref{norm_vare}), \eqref{vsubbarv} and Lemma \ref{qbound_lemma} that 
	\begin{align}\label{bvapt}
		&\varepsilon_{t+1}\leq  2G_{\phi}\Gamma \Xi^t \sum_{l=1}^m \|x_{l,t}^n\|+\frac{1}{2\gamma}(\Gamma \Xi^t \sum_{l=1}^m \|x_{l,t}^n\| )^2\notag\\
		& \leq\! \underbrace{\!2G_{\phi}\Gamma \Xi^t (C_q\!+\!C_q^1 t\!+\!C_q^2 t^2)\!+\!\frac{1}{2\gamma}\!\big[\Gamma \Xi^t (C_q\!+\!C_q^1 t\!+\!C_q^2 t^2) \big]^2\!}_{b_{\varepsilon,t}}.
	\end{align}
	Using the fact that $\sqrt{a+b}\leq \sqrt{a}+\sqrt{b}$ for all nonnegative real numbers $a,b$, 
	\begin{align}\label{sqrtbvapt}
		\sqrt{\varepsilon_{t+1}}\!\leq\! & \underbrace{\!\sqrt{2G_{\phi} \Gamma} \sqrt{\Xi^t}\big(\sqrt{C_q}\!+\!\sqrt{C_q^1t}\!+\!\sqrt{C_q^2}t\big)\!+\!\frac{1}{\sqrt{2\gamma}} \Gamma \Xi^t C(t)\!}_{b_{\sqrt{\varepsilon},t}},
	\end{align} 
	where $C(t)=(C_q+C_q^1 t+C_q^2 t^2)$.
\end{proof}
\subsection{Proof of Lemma \ref{in_V_lem}}\label{Vt_proof}
\begin{proof}
	By Lemma \ref{pk_lemma} and \eqref{xave_up}, we have
	\begin{align}\label{xd}
	\bar{x}_t-\gamma \bar{g}_t -\bar{x}_{t+1}-\gamma \underbrace{\sum_{i=0}^{n-1} e_{t}^i}_{e_{t+1}}-p_{t+1}= \gamma \bar{d}_{t+1},
	\end{align}
	where $\|p_{t+1}\|\leq \sqrt{2 \gamma \varepsilon_{t+1}}$, $\bar{d}_{t+1}\in \partial_{\varepsilon_{t+1}} \phi (\bar{x}_{t+1})$, $\bar{g}_t\triangleq \sum_{i=0}^{n-1} \nabla \mathbf{f}_{\pi_t^i}(\bar{x}_t^i)$ and $\mathbf{f}_{\pi_t^i}(x)=\frac{1}{m}\sum_{j=1}^m  f_{j,\pi_t^i}(x)$.
	\par In addition, by \eqref{barx_up}, $\bar{x}_t^k=\bar{x}_t-\gamma \sum_{i=0}^{k-1} \nabla \mathbf{f}_{\pi_t^i}(\bar{x}_t^i)-\gamma \sum_{i=0}^{k-1} e_t^i$. Then, combining \eqref{xd} with \eqref{barx_up},
	\begin{align*}
	&\bar{x}_t^k-\bar{x}_{t+1}\\
	=&\bar{x}_t-\gamma \sum_{i=0}^{k-1} \nabla \mathbf{f}_{\pi_t^i}(\bar{x}_t^i)-\gamma \sum_{i=0}^{k-1} e_t^i\\ &-\big(\bar{x}_t-\gamma \sum_{i=0}^{n-1} \nabla \mathbf{f}_{\pi_t^i}(\bar{x}_t^i)-\gamma \sum_{i=0}^{n-1} e_{t}^i-p_{t+1}-\gamma  \bar{d}_{t+1}\big)\\
	=&\gamma \sum_{i=k}^{n-1}  \nabla \mathbf{f}_{\pi_t^i}(\bar{x}_t^i) +\gamma \underbrace{\sum_{i=k}^{n-1} e_t^i}_{\tilde{e}_{k,t+1}}+p_{t+1}+\gamma  \bar{d}_{t+1}.
	\end{align*}
	Taking the norm of $\bar{x}_t^k-\bar{x}_{t+1}$, we have
	\begin{align}\label{barxnorm}
	&\|\bar{x}_t^k -\bar{x}_{t+1}\|^2\notag\\
	=&\|\gamma \sum_{i=k}^{n-1}  \nabla \mathbf{f}_{\pi_t^i}(\bar{x}_t^i) +\gamma \tilde{e}_{k,t+1}+p_{t+1}+\gamma\bar{d}_{t+1}\|^2\notag\\
	\leq & 6\gamma^2 \|\!\sum_{i=k}^{n-1} \! \nabla \mathbf{f}_{\pi_t^i}(\bar{x}_t^i)\|^2\!+\!6\gamma^2\|\tilde{e}_{k,t+1}\|^2\!\notag\\
	&+\!6\gamma^2\|\bar{d}_{t+1}\|^2\!+\!2\|p_{t+1}\|^2\notag\\
	\leq & 6\gamma^2\! \Big(\!2\big\|\sum_{i=k}^{n-1} \! (\nabla \mathbf{f}_{\pi_t^i}\!(\bar{x}_t^i)\!-\!\nabla \mathbf{f}_{\pi_t^i}\!(x_*))\big\|^2\!+\!2\big\|\sum_{i=k}^{n-1}\!\nabla \mathbf{f}_{\pi_t^i}\!(x_*) \big\|^2\!\Big)\notag\\
	&+6\gamma^2 \|\tilde{e}_{k,t+1}\|^2+6\gamma^2 \|\bar{d}_{t+1}\|^2+2\|p_{t+1}\|^2\notag\\
	\overset{\eqref{fnablf},\eqref{pt_range}}{\leq} & 6\gamma^2\! \big(4Ln \!\sum_{i=k}^{n-1} D_{\mathbf{f}_{\pi_t^i}}(x_*,\bar{x}_t^i)+2\|\sum_{i=k}^{n-1}\nabla \mathbf{f}_{\pi_t^i}(x_*) \|^2\big)\notag\\
	&+6\gamma^2 \|\tilde{e}_{k,t+1}\|^2+6\gamma^2 \|\bar{d}_{t+1}\|^2+4\gamma \varepsilon_{t+1}\notag \\
	\overset{\eqref{bet},\eqref{bvapt}}{\leq} & 6\gamma^2 \big(4Ln \sum_{i=0}^{n-1} D_{\mathbf{f}_{\pi_t^i}}(x_*,\bar{x}_t^i)\!+\!2\|\sum_{i=k}^{n-1}\nabla \mathbf{f}_{\pi_t^i}(x_*) \|^2\big)\notag\\
	&+12n^2\gamma^2  b_{e,t}^2 + 12 n^2 \gamma^4  C_0^2+6\gamma^2 \|\bar{d}_{t+1}\|^2+4\gamma b_{\varepsilon,t}.
	\end{align}
	{Next, consider the term $\|\sum_{i=k}^{n-1}\nabla \mathbf{f}_{\pi_t^i}(x_*) \|^2$ in \eqref{barxnorm}. By Lemma \ref{xavelem} with $\bar{X}_{\pi}=\frac{1}{n-k} \sum_{i=k}^{n-1} \nabla \mathbf{f}_{\pi_t^i}(x_*)$ and $\bar{X}=\frac{1}{n}\nabla f(x_*)$, 
		\begin{align*}
		&\mathbb{E} \Big[\| \sum_{i=k}^{n-1}\nabla \mathbf{f}_{\pi_t^i}(x_*) \|^2\Big]\\
		=&(n-k)^2 \mathbb{E}\Big[ \|\frac{1}{n-k} \sum_{i=k}^{n-1} \nabla \mathbf{f}_{\pi_t^i}(x_*)\|^2\Big]\\
		=& (n-k)^2 \mathbb{E}\big[\|\bar{X}_{\pi}\|^2\big]\\
		= &(n-k)^2 \big( \|\bar{X}\|^2+\mathbb{E}[\|\bar{X}_{\pi}-\bar{X}\|^2]\big)\\
		=& (n-k)^2 \|\frac{1}{n}\nabla f(x_*) \|^2+(n-k) \frac{k}{(n-1)} \sigma_*^2,
		\end{align*}
		where $\sigma_*^2\triangleq \frac{1}{n}\sum_{i=1}^{n} \|\nabla \mathbf{f}_{\pi_t^i}(x_*)-\frac{1}{n}\nabla f(x_*)\|^2$ is the population variance at the optimum $x_*$. } 
	Summing this over $k$ from $0$ to $n-1$,
	\begin{align}\label{sumnablaf}
	&\sum_{k=0}^{n-1} \mathbb{E} \Big[\| \sum_{i=k}^{n-1}\nabla \mathbf{f}_{\pi_t^i}(x_*) \|^2\Big]\notag\\
	\leq& \sum_{k=0}^{n-1} (n-k)^2 \|\frac{1}{n}\nabla f(x_*) \|^2+\sum_{k=0}^{n-1} \frac{k(n-k)}{n-1}\sigma_*^2\notag\\
	=&\frac{n(n+1)(2n+1)}{6n^2} \|\nabla f(x_*) \|^2+ \frac{n(n+1)}{6}\sigma_*^2\notag\\
	\leq & \frac{3n}{4} G_f^2 +\frac{n^2\sigma_*^2}{4}, 
	\end{align}
	where the inequality holds due to $n\geq 2$, $n+1<2n$ and Assumption \ref{f_assump} (d). 
	\par Thus, summing \eqref{barxnorm} over $k$ gives
	\begin{align*}
	&\sum_{k=0}^{n-1}\mathbb{E}[\|\bar{x}_t^k-\bar{x}_{t+1}\|^2]\\
	\leq &24\gamma^2Ln^2\sum_{i=0}^{n-1} \mathbb{E}[D_{\mathbf{f}_{\pi_t^i}}(x_*,\bar{x}_t^i)]+9\gamma^2 n G_f^2+3\gamma^2 n^2\sigma_*^2\\
	&+6\gamma^2 n G_{\phi}^2+12n^3\gamma^2  b_{e,t}^2 + 12 n^3 \gamma^4  C_0^2+4n\gamma b_{\varepsilon,t},
	\end{align*}
	where we use \eqref{sumnablaf} and Assumption \ref{f_assump} (c).
\end{proof}
\bibliographystyle{ieeetran}
\bibliography{refer}

\end{document}